\documentclass[11pt,twoside,english,reqno,a4paper]{amsart}
\usepackage{listings,graphicx,amsmath,varioref,amscd,amssymb,color,bm,stmaryrd,amsthm,amsfonts,graphics,geometry,latexsym,pgf,pst-all}
\usepackage{tikz-cd}
\usetikzlibrary{patterns}

\setlength{\parindent}{0pt}
\theoremstyle{plain}
\usepackage{esint}
\usepackage{amsthm}
\theoremstyle{plain}
\newtheorem{theorem}{Theorem}[section]
\newtheorem{proposition}[theorem]{Proposition}

\newtheorem{lemma}[theorem]{Lemma}

\newtheorem{Corollary}[theorem]{Corollary}

\usepackage{geometry}
\geometry{
	a4paper,
	left=27mm,
	right=27mm,
	top=30mm,
	bottom=30mm,
}

\usepackage[colorlinks=false]{hyperref}

\theoremstyle{definition}
\newtheorem{defin}[theorem]{Definition}

\newtheorem{remark}[theorem]{Remark}
\newtheorem{example}{Example}

\theoremstyle{remark}



\usepackage{fouriernc}
\usepackage[T1]{fontenc}

\def\bk{\color{black}}

\numberwithin{equation}{section}

\newcommand{\res}{\!\!\mathop{\hbox{
			\vrule height 7pt width .5pt depth 0pt
			\vrule height .5pt width 6pt depth 0pt}}
	\nolimits}
\newcommand{\DM }{\mathcal{DM}^\infty }

\newcommand{\Div}{\hbox{\rm div\,}}

\newcommand{\N}{\mathbb N}
\newcommand{\R}{\mathbb R}

\begin{document}

\title[Dirichlet problems for the 1--Laplacian with  $L^1$-data]{The Dirichlet problem for the $1$-Laplacian  with a general singular term and $L^1$-data}

\author[M. Latorre]{Marta Latorre}
\author[F. Oliva]{Francescantonio Oliva}
\author[F. Petitta]{Francesco Petitta}
\author[S. Segura de Le\'on]{Sergio Segura de Le\'on}

\address{}
\email{}

\address{Marta Latorre
\hfill\break\indent Matem\'atica Aplicada, Ciencia e Ingenier\'ia de los Materiales y Tecnolog\'ia Electr\'onica, Universidad Rey Juan Carlos
\hfill\break\indent C/Tulip\'an s/n 28933, M\'ostoles, Spain}\email{\tt marta.latorre@urjc.es}
\address{Francescantonio Oliva
\hfill\break\indent Dipartimento di Scienze di Base e Applicate per l' Ingegneria, Sapienza Universit\`a di Roma
\hfill\break\indent Via Scarpa 16, 00161 Roma, Italy}
\email{\tt francesco.oliva@sbai.uniroma1.it}
\address{Francesco Petitta
\hfill\break\indent Dipartimento di Scienze di Base e Applicate per l' Ingegneria, Sapienza Universit\`a di Roma
\hfill\break\indent Via Scarpa 16, 00161 Roma, Italy}
\email{\tt francesco.petitta@sbai.uniroma1.it}
\address{Sergio Segura de Le\'on
\hfill \break\indent Departament d'An\`alisi Matem\`atica, Universitat de Val\`encia
\hfill\break\indent Dr. Moliner 50,
46100 Burjassot, Val\`encia, Spain} \email{{\tt
sergio.segura@uv.es}}

%
\keywords{1-Laplacian, Nonlinear elliptic equations, Singular elliptic equations, $L^1$ data}
\subjclass[2010]{35J60, 35J75, 34B16,35R99, 35A02}


\begin{abstract}
We study the Dirichlet problem for an elliptic  equation involving the $1$-Laplace operator and a reaction term, namely:
$$
\left\{\begin{array}{ll}
\displaystyle -\Delta_1 u =h(u)f(x)&\hbox{in }\Omega\,,\\
u=0&\hbox{on }\partial\Omega\,,
\end{array}\right.
$$
where  $\Omega\subset\R^N$ is an open bounded  set  having Lipschitz boundary, $f\in L^1(\Omega)$ is nonnegative,  and $h$ is a continuous real function that may possibly blow up at zero. We investigate optimal ranges for the data in  order to obtain existence, nonexistence  and (whenever expected)  uniqueness of nonnegative  solutions.
\end{abstract}

\maketitle

\tableofcontents

\section{Introduction}

This paper is concerned with the Dirichlet problem for an equation involving the $1$-Laplace operator and a lower order term in a bounded open set $\Omega\subset\R^N$ having Lipschitz boundary. More precisely, we deal with problem
\begin{equation}\label{P}
\left\{\begin{array}{ll}
\displaystyle -\Div\left(\frac{Du}{|Du|}\right)=h(u)f(x)&\hbox{in }\Omega\,,\\
u=0&\hbox{on }\partial\Omega\,,
\end{array}\right.
\end{equation}
where $f\in L^1(\Omega)$ and $h$ is a continuous real function; both  are assumed to be nonnegative and we look for nonnegative solutions. The function $h(s)$ is not necessarily required to be bounded near $s=0^+$.

Problems involving the $1$-Laplace operator are  known to be closely related to the mean curvature operator (\cite{OsSe}) and they  enter  in  a variety of practical   issues as (e.g. in the autonomous case $h\equiv 1$)  in image restoration and in  torsion problems (\cite{K, ka, Sapiro, M, BCRS}). The nonautonomous/nonsingular case in relation with more theoretical subjects such as eigenvalues problems and  critical Sobolev exponent has also been considered (see \cite{D, KS, Ch} and references therein).
We refer the interested reader to the monograph \cite{ACM} for a more complete review on applications in image processing.

For  what concerns our study, in particular,  it is worth recalling first the typical features of the  related autonomous problem
\begin{equation}\label{P0}
\displaystyle -\Div\left(\frac{Du}{|Du|}\right)=f(x)\quad\hbox{in }\Omega\,.
\end{equation}
This problem has been usually  addressed as the limit problem for $p$-laplacian type equations when $p$ goes to $1^+$.
In this setting it was considered by B. Kawohl in \cite{K} for constant data and by M. Cicalese and C. Trombetti in \cite{CT} for data belonging to the Marcinkiewicz space $L^{N,\infty}(\Omega)$. General data belonging to $W^{-1,\infty}(\Omega)$, the dual space of $W_0^{1,1}(\Omega)$, are taken in \cite{MST1}. Summarizing the results of these papers, it is proved that there exists a $BV$--function which is the limit of solutions to $p$--laplacian type equations only  when the datum has small norm.

Existence of solution to \eqref{P0} is also analyzed in \cite{MST1} taken into account the notion of solution for equations involving the 1--Laplace operator introducted in \cite{ABCM, D}. The $L^1$--setting is studied in \cite{MST2}, where it is observed that if $f\in L^1(\Omega)\backslash W^{-1, \infty}(\Omega)$, no almost everywhere finite solution can be expected (see \cite[Remarks 4.4 and 4.5]{MST2}). More precisely, it is proved that to get an almost everywhere finite solution, the datum $f$ must belong to $W^{-1,\infty}(\Omega)$ with $|| f||_{W^{-1,\infty}(\Omega)}\le 1$.

Concerning the presence of a non-constant term $h$ in  \eqref{P} one can refer to \cite{KS,D, Ch} for variational method in the study of eigenvalues type problems or critical Sobolev exponents. More recent papers analyzing the non autonomous case can be found in \cite{FP, MoSe}.
The case of a, possibly singular,  general nonlinearity $h$ in \eqref{P} has been first addressed in  \cite{dgs} in the case  $\displaystyle h(s)={s^{-\gamma}}$ (with $0<\gamma\le1$); here a $BV$--solution is found for every nonnegative $f\in L^N(\Omega)$. This existence result, without any smallness requirement, implies that the presence of $h(u)$ has a regularizing effect. Furthermore, in this paper it is shown the contrast between two cases of data: positive ($f>0$) and nonnegative ($f\ge0$). Roughly speaking, solutions of problem \eqref{P} with positive data have better properties and even a uniqueness result can be proved. In \cite{dgop} (see also \cite{O}) more general functions $h$ and data $f$ are analyzed and sharp existence and nonexistence results for \eqref{P} are given for nonnegative data in $f\in L^{N,\infty}(\Omega)$, completing the picture of \cite{MST1} and, as we already mentioned, emphasizing the regularity effect given by the nonlinear term in order to get non-trivial solutions.

\medskip
Here we want to  go further  focusing  on a different type of regularizing effect given by the nonlinear term $h$ if the data are assumed to be less regular. One of the purposes will in fact consist in    discriminate  whether  a blow-up phenomenon as the one described in \cite{MST2} does occur or not depending on the behavior of the nonlinearity at infinity.
Our main aim then will be the identification of  a general class of continuous functions $h$ such that there exists a solution to \eqref{P} for every nonnegative $f\in L^1(\Omega)$. We prove that it is so when $ h(s){\to}0$, as $s\to+\infty$. Moreover,  if $h$ attains the zero value, then we found a solution in $BV(\Omega)\cap L^\infty(\Omega)$, while if $h$ is separated from zero (that is: $h(s)\ge m>0$ for all $s>0$), then existence of a $BV$--solution is only possible when $f\in W^{-1,\infty}(\Omega)$. Therefore, the closer to zero is $h$, the greater the regularizing effect. Also we address to the interplay between  the  behavior of $h$ at infinity and the one near the origin, showing how  this produces non-degenerate solutions  no matter of the regularity or the smallness of the data.
\medskip

The main difficulties in this type of analysis rely, on one hand, on well known issues related to the presence of the singular operator $\Delta_1$, as for instance the need to give sense to the singular quotient $|Du|^{-1}Du$,   the  invariance under monotone transformations that (e.g.  in the autonomous case)  leads to a structural non-uniqueness property.  Moreover, one needs to work in  the $BV$ framework  where weaker compactness  results imply further complications as the ones related to the traces of the limit functions.

On the other hand, the presence of the general, and possibly singular,  nonlinear term $h$ makes the situation even worse as no monotonicity arguments apply and nearly all a priori estimates are only local.
To get rid of these problems we first need to construct an  auxiliary function that in some sense  controls $h$ and then  to carefully estimate suitable truncations of the approximate solutions.

 In order  to pass to the limit then, one has to take care of the possibly singular behavior of the lower order term and this leads to develop an argument that allows to manage the set where the approximating solutions are small; this is done again  by mean of  suitably truncated auxiliary functions.

\medskip

This paper is organized as follows.  Section 2 is devoted to introduce some  preliminary tools. In Section 3 we state our main assumptions, our notion of solution to problem \eqref{P} and our main results on existence and uniqueness in the case of positive data. These results are proved in Section 4. In Section 5, we analyze the case of nonnegative data. The last section is concerned with further remarks and examples  also casting lights on  the optimality of the presented results.

\section{Preliminaries}
\subsection{Fixing some notation}

We denote by $\mathcal H^{N-1}(E)$ the $(N - 1)$-dimensional Hausdorff measure of a set $E$ while $|E|$ stands for its $N$-dimensional Lebesgue measure.
 The characteristic function of a set $E$ will be written as $\chi_{E}$. \\
For the entire paper $\Omega\subset\mathbb{R}^N$ ($N\ge 1$) is open and bounded with Lipschitz boundary.  Thus, an outward normal unit   vector $\nu(x)$ is defined for $\mathcal H^{N-1}$--almost every   $x\in\partial\Omega$.
 We will make use of the usual Lebesgue and Sobolev
 spaces, denoted by $L^q(\Omega)$  and $W_0^{1,p}(\Omega)$, respectively, and $C_c^1(\Omega)$ stands for the set of all functions with compact support which are continuously differentiable on $\Omega$.

 Positive functions, that is, those satisfying $f(x)>0$ for almost all $x\in\Omega$, will be denoted by $f>0$, while $f\ge0$ stands for functions such that $f(x)\ge0$ almost everywhere in $\Omega$.

In what follows, $\mathcal{M}(\Omega)$ is the usual space of Radon measures with finite total variation over $\Omega$. The space $\mathcal{M}_{\rm loc}(\Omega)$ is the space of Radon measures which are locally finite in $\Omega$, that is, measures with finite total variation on every $\omega\subset\!\subset\Omega$. Integrals with respect to Lebesgue measure will be denoted as $\int_\Omega f\, dx$, for a Lebesgue summable function $f$. For an integral with respect to a measure $\mu\in\mathcal{M}(\Omega)$ other than Lebesgue and Hausdorff ones we will use the notation $\int_\Omega f \mu$, instead of $\int_\Omega f \, d\mu$.

\medskip
For a fixed $k>0$, the truncation function $T_{k}:\mathbb{R}\mapsto\mathbb{R}$ is given by
\begin{align*}
T_k(s):=&\max (-k,\min (s,k))\,;
\end{align*}

while, for each $\delta>0$, we also use the following auxiliary function
\begin{align}\label{Vdelta}
\displaystyle
V_{\delta}(s):=
\begin{cases}
1  &s\le \delta\,, \\
\displaystyle\frac{2\delta-s}{\delta} \ \ &\delta <s< 2\delta\,, \\
0 \ \ &s\ge 2\delta\,.
\end{cases}
\end{align}

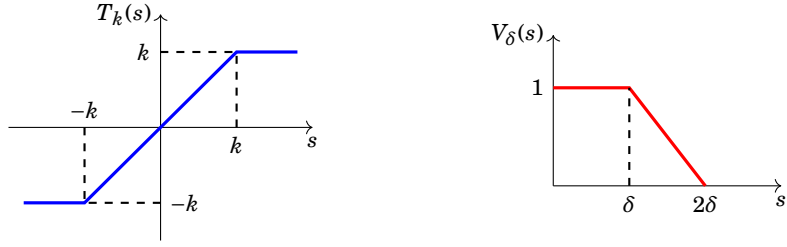
\begin{figure}[h]
\centering
\begin{minipage}{6cm}
\centering
\begin{tikzpicture}
\draw [->] (-2,0)--(2,0);
\draw [->] (0,-1.5)--(0,1.5);
\node [left] at (0,1.5) {{\footnotesize $T_k(s)$}};
\node [below] at (2,0) {{\footnotesize $s$}};

\node [below] at (1,0) {{\footnotesize $k$}};
\node [above] at (-1,0) {{\footnotesize $-k$}};
\node [left] at (0,1) {{\footnotesize $k$}};
\node [right] at (0,-1) {{\footnotesize $-k$}};
\draw [very thick, color=blue] (-1.8,-1)--(-1,-1)--(1,1)--(1.8,1);
\draw [thick, dashed] (-1,0)--(-1,-1)--(0,-1);
\draw [thick, dashed] (1,0)--(1,1)--(0,1);
\end{tikzpicture}
\end{minipage}
$ $
\begin{minipage}{6cm}
\centering
\begin{tikzpicture}
\draw [->] (0,0)--(3,0);
\draw [->] (0,0)--(0,2);
\node [left] at (0,2) {{\footnotesize $V_\delta(s)$}};
\node [below] at (3,0) {{\footnotesize $s$}};

\node [below] at (1,0) {{\footnotesize $\delta$}};
\node [below] at (2,0) {{\footnotesize $2\delta$}};
\node [left] at (0,1.3) {{\footnotesize $1$}};
\draw [very thick, color=red] (0,1.3)--(1,1.3)--(2,0);
\draw [thick, dashed] (1,0)--(1,1.3);
\end{tikzpicture}
\end{minipage}
\caption{Functions $T_k(s)$ and $V_\delta(s)$}
\label{Grafica_Tk_Vk}
\end{figure}

We explicitly remark that, if no otherwise specified, we will denote by $C$ several positive constants whose value may change from line to line and, sometimes, on the same line. These values will only depend on the data but they will never depend on the indexes of the sequences we will introduce.

\subsection{Functions of bounded variation}
The space of functions of bounded variation is defined by
$$BV(\Omega):=\{ u\in L^1(\Omega) : Du \in \mathcal{M}(\Omega)^N \}\,.$$
We also recall that every function $u\in BV (\Omega)$ has a trace defined on $\partial\Omega$ and $u\big|_{\partial\Omega}\in L^1(\partial\Omega)$.
We underline that the $BV(\Omega)$ space, endowed with the norm
$$ ||u||_{BV(\Omega)}=\int_\Omega |u|\,dx + \int_\Omega|Du|\,,$$
or with
$$\displaystyle ||u||_{BV(\Omega)}=\int_{\partial\Omega}
|u|\, d\mathcal H^{N-1}+ \int_\Omega|Du|\,,$$
is a Banach space and both norms are equivalent. We denote by $BV_{\rm loc}(\Omega)$ the space of functions in $BV(\omega)$ for every $\omega \subset\!\subset\Omega$. By $L_u$ we denote the set of approximate points of $u$, so that the approximate discontinuity set is $S_u =\Omega\setminus L_u$ and  with $J_u$ the set of the approximate jump points. It is well known that any function $u\in BV(\Omega)$ can be identified with its precise representative $u^*$, which is the Lebesgue representative $\tilde u$ in $L_u$, while $u^*=\frac{u^++u^-}{2}$ in $J_u$ where $u^+,u^-$ are the one--sided limits of $u$. Moreover, it can be shown that
$\mathcal{H}^{N-1}(S_u\setminus J_u)=0$ and that $u^*$ is $\mathcal{H}^{N-1}$--measurable. With an abuse of notation, in the sequel we will frequently write $g(u)^*$, where $g$ is merely continuous and $u$ belongs to the $BV$ space, meaning that $g(u)^*$ is equal to $g(\tilde{u})$ in $L_u$ while $g(u)^*=\frac{g(u^+) + g(u^-)}{2}$ in $J_u$.

When taken limits, we often apply the lower semicontinuity with respect to the $L^1$--convergence of the functionals given by
\begin{align*}
&  u\mapsto \int_\Omega\varphi|Du|\qquad\hbox{if }\varphi\in C_0^1(\Omega)\hbox{ and }\varphi\ge0\,,  \\
&   u\mapsto \int_\Omega|Du|+\int_{\partial\Omega}|u|\, \mathcal H^{N-1}\,.
\end{align*}

For a more complete review regarding $BV$ spaces we refer to \cite{AFP} from which we mainly derive our notation.

\subsection{$L^\infty$-divergence-measure vector fields}
We denote by
$$\DM(\Omega):=\{ z\in L^\infty(\Omega)^N : \operatorname{div}z \in \mathcal{M}(\Omega) \}\,,$$
and by $\DM_{\rm loc}(\Omega)$ its local version, namely: the space of bounded vector fields $z$ such that $\operatorname{div}z \in \mathcal{M}_{\rm loc}(\Omega)$.
For our purposes we recall the $L^\infty$-divergence-measure vector fields theory due to \cite{An} and \cite{CF}.
We first recall that if $z\in \DM(\Omega)$ then $\operatorname{div}z $ is a measure absolutely continuous with respect to $\mathcal H^{N-1}$.
\\In \cite{An} the following distribution $(z,Dv): C^1_c(\Omega)\mapsto \mathbb{R}$ is considered:
\begin{equation}\label{dist1}
\langle(z,Dv),\varphi\rangle:=-\int_\Omega v^*\varphi\operatorname{div}z-\int_\Omega
vz\cdot\nabla\varphi\, dx\,.
\end{equation}
In \cite{MST2} and \cite{C} the authors prove that $(z, Dv)$ is well defined if $z\in \DM(\Omega)$ and $v\in BV(\Omega)\cap L^\infty(\Omega)$ since one can show that $v^*\in L^\infty(\Omega,\operatorname{div}z)$. Moreover, in \cite{dgs} the authors show that \eqref{dist1} is well posed if $z\in \DM_{\rm loc}(\Omega)$ and $v\in BV_{\rm loc}(\Omega)\cap L^1_{\rm loc}(\Omega, \operatorname{div}z)$. In all cases, it holds that
\begin{equation*}\label{finitetotal}
|\langle   (z, Dv), \varphi\rangle| \le ||\varphi||_{L^{\infty}(U) } ||z||_{L^\infty(U)^N} \int_{U} |Dv|\,,
\end{equation*}
for all open set $U \subset\subset \Omega$ and for all $\varphi\in C_c^1(U)$. Moreover one has
\begin{equation}\label{finitetotal1}
\left| \int_B (z, Dv) \right|  \le  \int_B \left|(z, Dv)\right| \le  ||z||_{L^\infty(U)^N} \int_{B} |Dv|\,,
\end{equation}
for all Borel sets $B$ and for all open sets $U$ such that $B\subset U \subset \Omega$.\\
We recall that in \cite{An} it is proved that every $z \in \mathcal{DM}^{\infty}(\Omega)$ possesses a weak trace on $\partial \Omega$ of its normal component which is denoted by
$[z, \nu]$ (recall that $\nu(x)$ is the outward normal unit vector). Moreover, it holds
\begin{equation*}\label{des1}
||[z,\nu]||_{L^\infty(\partial\Omega)}\le ||z||_{L^\infty(\Omega)^N}\,,
\end{equation*}
and it also satisfies that if $z \in \mathcal{DM}^{\infty}(\Omega)$ and $v\in BV(\Omega)\cap L^\infty(\Omega)$, then
\begin{equation}\label{des2}
v[z,\nu]=[vz,\nu]\,,
\end{equation}
(see \cite{C}).\\
Finally, we will also use the Green formula due to \cite{dgs}. If $z\in \DM_{\rm loc}(\Omega)$ and $v\in BV(\Omega)\cap L^\infty(\Omega)$ is such that $v^*\in L^1(\Omega,\operatorname{div}z)$, then $vz\in \DM(\Omega)$ and a weak trace can be defined as well as the following Green formula holds.
\begin{lemma}
	Let $z\in \DM_{\rm loc}(\Omega)$ and let $v\in BV(\Omega)\cap L^\infty(\Omega)$ be such that $v^*\in L^1(\Omega,\operatorname{div}z)$. Then $vz\in \DM(\Omega)$ and it holds
	\begin{equation}\label{green}
	\int_{\Omega} v^* \operatorname{div}z + \int_{\Omega} (z, Dv) = \int_{\partial \Omega} [vz, \nu] \ d\mathcal H^{N-1}\,.
	\end{equation}	
\end{lemma}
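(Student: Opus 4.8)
The plan is to establish the product rule $\operatorname{div}(vz)=v^{*}\operatorname{div}z+(z,Dv)$ first at a local scale, then to use the two integrability assumptions to promote it to a globally finite identity of measures, and finally to apply the Gauss--Green formula for $\DM(\Omega)$ vector fields paired with the constant function $1$.

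Since $v\in L^{\infty}(\Omega)$ and $z\in L^{\infty}(\Omega)^{N}$, the field $vz$ is automatically bounded, so the only point is the finiteness of $\operatorname{div}(vz)$. Fix $\omega\subset\!\subset\Omega$; then $z\in\DM(\omega)$ and $v\in BV(\omega)\cap L^{\infty}(\omega)$. Recalling that $\operatorname{div}z$ is absolutely continuous with respect to $\mathcal H^{N-1}$ and that $v^{*}$ is bounded and $\mathcal H^{N-1}$--measurable, one has $v^{*}\in L^{\infty}(\omega,\operatorname{div}z)$, and the classical Anzellotti--Chen--Frid calculus (see \cite{An,CF,C,MST2}) yields $vz\in\DM(\omega)$ together with
\begin{equation*}
\operatorname{div}(vz)=v^{*}\operatorname{div}z+(z,Dv)\qquad\text{in }\mathcal M(\omega)\,.
\end{equation*}
I would check this by testing the distributional definition of $\operatorname{div}(vz)$ against $\varphi\in C^{1}_{c}(\omega)$, mollifying $v$, applying the Leibniz rule to the smooth approximations, and passing to the limit through \eqref{dist1}; boundedness of $v$ keeps every term under control. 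Since these local identities are compatible under restriction, they glue to a measure $\mu\in\mathcal M_{\rm loc}(\Omega)$ that coincides with $\operatorname{div}(vz)$ as a distribution on $\Omega$ and equals $v^{*}\operatorname{div}z+(z,Dv)$ on each $\omega\subset\!\subset\Omega$.

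Next I would prove that $\mu$ is in fact finite. The hypothesis $v^{*}\in L^{1}(\Omega,\operatorname{div}z)$ gives
\begin{equation*}
|v^{*}\operatorname{div}z|(\Omega)=\int_{\Omega}|v^{*}|\,d|\operatorname{div}z|<+\infty\,,
\end{equation*}
while \eqref{finitetotal1} applied with $B=U=\Omega$ yields
\begin{equation*}
\int_{\Omega}\big|(z,Dv)\big|\le\|z\|_{L^{\infty}(\Omega)^{N}}\int_{\Omega}|Dv|<+\infty
\end{equation*}
because $v\in BV(\Omega)$. Hence $\operatorname{div}(vz)=v^{*}\operatorname{div}z+(z,Dv)\in\mathcal M(\Omega)$, i.e. $vz\in\DM(\Omega)$.

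Finally, as $vz\in\DM(\Omega)$ it possesses a weak normal trace $[vz,\nu]\in L^{\infty}(\partial\Omega)$, and the Gauss--Green formula of \cite{An,CF} for $\DM(\Omega)$ fields, tested against the constant function $w\equiv 1\in BV(\Omega)\cap L^{\infty}(\Omega)$ (for which $w^{*}=1$, $Dw=0$ and hence $(vz,Dw)=0$), reduces to
\begin{equation*}
\int_{\Omega}\operatorname{div}(vz)=\int_{\partial\Omega}[vz,\nu]\,d\mathcal H^{N-1}\,.
\end{equation*}
Substituting the product rule on the left--hand side gives precisely \eqref{green}. I expect the main obstacle to be the local product rule itself: one has to make sense of, and manipulate, the pairing $(z,Dv)$ when $\operatorname{div}z$ is only locally finite and to guarantee that the Leibniz identity survives the integration of $v^{*}$ against $\operatorname{div}z$ --- this is exactly where the boundedness of $v$ and the absolute continuity $\operatorname{div}z\ll\mathcal H^{N-1}$ enter --- after which the two integrability hypotheses are precisely what upgrade a locally finite measure to a finite one.
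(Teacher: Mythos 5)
The paper does not prove this lemma at all: it is stated as a quoted result from \cite{dgs}, so there is no internal proof to compare with. Your sketch is correct and follows exactly the standard route behind that cited result: the distributional product rule $\operatorname{div}(vz)=v^{*}\operatorname{div}z+(z,Dv)$, finiteness of each summand (the first from the hypothesis $v^{*}\in L^{1}(\Omega,\operatorname{div}z)$, the second from $v\in BV(\Omega)$, $z\in L^{\infty}(\Omega)^{N}$ via \eqref{finitetotal1}), whence $vz\in\DM(\Omega)$, and then the Gauss--Green formula for $\DM(\Omega)$ fields tested with the constant function. One simplification worth noting: no mollification is needed for the product rule, since it is literally a rearrangement of the defining identity \eqref{dist1}; the only nontrivial inputs are that $(z,Dv)$ is a Radon measure obeying \eqref{finitetotal1} and that $v^{*}$ is $|\operatorname{div}z|$-measurable (via $\operatorname{div}z\ll\mathcal H^{N-1}$), both of which the paper already quotes from \cite{An,C,MST2,dgs}.
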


\subsection{Construction of an auxiliary function}
\label{sec:h}

Our method of finding a solution to problem \eqref{P} is based on the possibility of taking $1/h(u)$ as test function (at least when $u$ is large enough). Obviously $h$ is merely a continuous function and we are not allowed to use it. Hence, we need to have a suitable approximation $\overline h$ of $h$ which may be used. This subsection is devoted to define this approximation.

Let $h:(0,\infty)\mapsto (0,\infty)$ be a continuous function such that there exists finite
$$\lim_{s\to \infty}  h(s) := {h}(\infty)\,.$$
Then, the aim is to construct a decreasing function  $\overline h\>:\>(0,\infty)\mapsto (0,\infty)$ satisfying
\begin{enumerate}
	\item[a)] $\overline h(s)\ge  h(s)$ for all $s\ge0$;
	\item[b)] $\overline h$ is locally Lipschitz--continuous;
	\item[c)] $\displaystyle \lim_{s\to \infty} \overline h(s):= {h}(\infty)$.
\end{enumerate}
We may define $\overline h$ through the following stages.
\begin{enumerate}
	\item[(i)] In the first step, we get a function $h_1(s)$ which is a $C^1$-approximation to
$h(s)+e^{-s}$ lying between  $h(s)$ and $h(s)+2e^{-s}$. To this end, we proceed as in the proof of
Meyers-Serrin's Theorem. Consider the intervals $(\frac1{n+2}, \frac1n)$ and $(n, n+2)$,
$n\in\N$, jointly with $(\frac12, 2)$. They form an open cover of $(0,+\infty)$. Then a smooth
partition of unity can be selected. Using mollifiers, choose a $C^1$-approximation to $h(s)
+e^{-s}$ between  $h(s)$ and $h(s)+2e^{-s}$ in each interval of the cover, and assemble
them by means of the partition of unity to obtain a function $h_1$. Thus, $h_1$ is a
$C^1$-function defined on $(0,+\infty)$ such that $h_1(s)\ge h(s)$ for all $s>0$.
	\item[(ii)] We denote by $h_2$ the rising sun function of $h_1$ (see, for instance, \cite[Exercises 1G and 4F]{RS}). We recall that the set of all shadow points of $h_1$ is an open set and can be written as a disjoint union $\cup_{j\in\N}(a_j, b_j)$. The procedure to pass from $h_1$ to $h_2$ can easily be translated to their derivatives: It just consists in changing the value of $h'_1$ on any $(a_j, b_j)$ by defining $h'_2(s)=0$. Observe that $h'_1$ is a continuous function and so it is bounded from below on any interval $(0,n]$, with $n\in\N$. We infer that $h'_2$ is non--positive and bounded from below on any interval $(0,n]$, with $n\in\N$, although it may be discontinuous at the points $a_j$. Hence, $h_2$ is a non--increasing locally Lipschitz--continuous function such that $h_2(s)\ge  h(s)$ for all $s>0$ and $\displaystyle \lim_{s\to \infty} h_2(s):= h(\infty)$.
	\item[(iii)] We define $\overline h(s)=h_2(s)+ e^{-s}$ which is decreasing, and so a), b) and c) hold.
\end{enumerate}

\begin{figure}
\centering
\begin{tikzpicture}
\draw[<->] (0,8.3)--(0,1)--(11,1);
\fill (10.5,1) circle (0.1pt) node[below] {$s$};
\fill (0,1) circle (1.5pt) node[below] {$0$};
\fill (1,0) node[below] {$h$};
\fill (4,0) node[below] {$h_1$};
\fill (7,0) node[below] {$h_2$};
\fill (10,0) node[below] {$\overline h$};
\draw [very thick, orange]  (0,0)--(2,0);
\draw [very thick, dash dot, blue] (3,0)--(5,0);
\draw [very thick, dotted, red]  (6,0)--(8,0);
\draw [very thick, dashed]  (9,0)--(11,0);

\draw [thick, orange]  (0,6.50)--(0.03,6.10)--(0.06,6.20)--(0.1,5.90)--(0.15,5.20)--(0.2,5.40)--(0.25,5.10)--(0.3,4.50)--(0.4,3.55)--(0.5,3.70)--(0.6,3.45)--(0.7,3.25)--(0.8,3.30)--(0.9,3.42)--(1,3.4)
--(1.1,3.50)--(1.2,3.45)--(1.3,3.85)--(1.4,3.80)--(1.5,4.10)--(1.6,4.05)--(1.7,4.30)--(1.8,4.45)--(1.9,4.30)--(2,4.55)
--(2.1,4.40)--(2.2,4.55)--(2.3,4.40)--(2.4,4.45)--(2.5,4.25)--(2.6,4.30)--(2.7,4.05)--(2.8,4.10)--(2.9,3.55)--(3,3.65)
--(3.1,3.50)--(3.2,3.50)--(3.3,3.20)--(3.4,3.25)--(3.5,2.70)--(3.6,2.90)--(3.7,2.55)--(3.8,2.60)--(3.9,2.25)--(4,2.50)
--(4.1,2.15)--(4.2,2.25)--(4.3,1.90)--(4.4,1.95)--(4.5,1.75)--(4.6,1.90)--(4.7,1.65)--(4.8,1.75)--(4.9,1.60)--(5,1.65)
--(5.1,1.50)--(5.2,1.75)--(5.3,1.60)--(5.4,1.75)--(5.5,1.65)--(5.6,1.70)--(5.7,1.60)--(5.8,1.85)--(5.9,1.75)--(6,1.95)
--(6.1,1.80)--(6.2,2.10)--(6.3,2.00)--(6.4,2.25)--(6.5,2.10)--(6.6,2.40)--(6.7,2.25)--(6.8,2.55)--(6.9,2.45)--(7,2.70)
--(7.1,2.65)--(7.2,2.90)--(7.3,2.75)--(7.4,3.05)--(7.5,2.95)--(7.6,3.15)--(7.7,3.00)--(7.8,3.25)--(7.9,3.10)--(8,3.30)
--(8.1,3.15)--(8.2,3.25)--(8.3,3.10)--(8.4,3.35)--(8.5,3.10)--(8.6,3.25)--(8.7,3.20)--(8.8,3.25)--(8.9,3.00)--(9,3.10)
--(9.1,2.95)--(9.2,3.05)--(9.3,2.85)--(9.4,2.90)--(9.5,2.70)--(9.6,2.75)--(9.7,2.55)--(9.8,2.65)--(9.9,2.60)--(10,2.55);
\draw [very thick, dash dot, blue]  (0,6.90)--(0.2,5.89)--(0.4,4.41)--(0.6,3.62)--(0.8,3.39)--(1,3.50)
--(1.2,3.77)--(1.4,4.07)--(1.6,4.33)--(1.8,4.51)--(2,4.61)
--(2.2,4.62)--(2.4,4.55)--(2.6,4.41)--(2.8,4.22)--(3,3.98)
--(3.2,3.71)--(3.4,3.42)--(3.6,3.14)--(3.8,2.86)--(4,2.60)
--(4.2,2.37)--(4.4,2.17)--(4.6,2.01)--(4.8,1.90)--(5,1.83)
--(5.2,1.81)--(5.4,1.82)--(5.6,1.88)--(5.8,1.97)--(6,2.09)
--(6.2,2.23)--(6.4,2.38)--(6.6,2.55)--(6.8,2.71)--(7,2.87)
--(7.2,3.02)--(7.4,3.15)--(7.6,3.26)--(7.8,3.34)--(8,3.39)
--(8.2,3.42)--(8.4,3.41)--(8.6,3.38)--(8.8,3.33)--(9,3.24)
--(9.2,3.14)--(9.4,3.03)--(9.6,2.90)--(9.8,2.77)--(10,2.64);
\draw [very thick, dotted, red] (0,7.00)--(0.1,6.65)--(0.2,6.20)--(0.3,5.65)--(0.35,5.15)--(0.45,4.71)--(0.6,4.71)
--(2.2,4.71)--(2.4,4.65)--(2.6,4.51)--(2.8,4.32)--(3,4.08)
--(3.2,3.81)--(3.4,3.52)
--(8.2,3.52)--(8.4,3.51)--(8.6,3.48)--(8.8,3.43)--(9,3.34)
--(9.2,3.24)--(9.4,3.13)--(9.6,3.00)--(9.8,2.87)--(10,2.74);
\draw [very thick, dashed]  (0,8.00)--(0.1,7.60)--(0.2,7.11)--(0.3,6.52)--(0.35,6.00)--(0.45,5.53)--(0.6,5.48)
--(2.2,5.19)--(2.4,5.10)--(2.6,4.94)--(2.8,4.74)--(3,4.48)
--(3.2,4.19)--(3.4,3.89)--(8.2,3.71)--(8.4,3.70)--(8.6,3.67)--(8.8,3.62)--(9,3.52)
--(9.2,3.42)--(9.4,3.31)--(9.6,3.17)--(9.8,3.04)--(10,2.91);
    \end{tikzpicture}
    \caption{The construction of $\overline{h}$}
    \label{fig:4}
    \end{figure}


\section{Assumptions and main results}
\label{main}
Consider the following Dirichlet problem
\begin{equation}
\label{pb}
\begin{cases}
\displaystyle  -\Delta_1 u = h(u)f(x) & \text{in}\;\Omega\,,\\
u\ge 0 & \text{in}\;\Omega\,,\\
u=0 & \text{on}\;\partial\Omega\,,
\end{cases}
\end{equation}
where the function $h:[0,\infty)\mapsto (0,\infty]$ is continuous, finite outside the origin, and  satisfying the following growth condition:
\begin{equation}\label{h1}\tag{h1}
\begin{aligned}
\displaystyle \exists\;{c},s_1>0, \gamma\ge 0 \;\ \text{such that}\;\  h(s)\le \frac{c}{s^\gamma} \ \ \text{if} \ \ s\leq s_1\,.
\end{aligned}
\end{equation}
We also assume that $h(0)\not= 0$ and  that there exists a finite limit at infinity of the function $h$, i.e.
\begin{equation}\label{h2}\tag{h2}
	 \displaystyle \lim_{s\to \infty} h(s):=h(\infty)\,.
\end{equation}
Here  we state the results relative to the case $h(\infty)=0$, while the   case $h(\infty)>0$ will be discussed  in Section \ref{hpositiva} below.

\medskip

The main feature of problem  \eqref{pb} is that the datum $f$ is merely integrable. Here we state the result in case that $f$ is positive, postponing the discussion of the nonnegative case to Section \ref{fnon}. Let us also underline that requiring that $h$ is positive is not restrictive at all: we refer to Section \ref{hzero} to handle the simpler case $h\ge 0$.
Finally we point out that the case of a  bounded $h$ is also covered by the above assumptions.

It is useful to fix the following notation:
$$\sigma:= \max(1,\gamma)\,,$$
which will be used henceforth. 
\begin{defin}\label{weakdef}
	A nonnegative function $u$ having $T_k(u)\in BV_{\rm{loc}}(\Omega)$ for any $k>0$ is a solution to problem \eqref{pb} if there exists $z\in \DM(\Omega)$ with $||z||_{L^\infty(\Omega)^N}\le 1$ such that $h(u)f\in L^{1}(\Omega)$  and
	\begin{align}
	&-\operatorname{div}z =h(u)f \ \ \ \ \ \ \ \ \ \ \ \text{as measures in } \Omega\,, \label{def_eqdistr}
	\\
	&(z,DT_k(u))=|DT_k(u)|  \ \ \ \ \text{as measures in } \Omega \text{ for any } k>0\,,\label{def_campo}
	\end{align}
	and one of the following conditions holds:
	\begin{equation}
	\lim_{\epsilon\to 0^+}  \fint_{\Omega\cap B(x,\epsilon)} T_k(u(y)) dy = 0 \ \ \text{for any } k>0 \ \ \text{or} \ \ \ [z,\nu] (x)= -1 \label{def_bordo}\ \ \ \text{for  $\mathcal{H}^{N-1}$-a.e. } x \in \partial\Omega\,.
	\end{equation}				
\end{defin}	

\begin{remark}
Let us observe that Definition \ref{weakdef} extends the ones in  \cite{dgs, dgop} the main difference being that, as no $BV$ solutions (nor bounded one) are expected in general (see Example \ref{esempio1} in Section \ref{examples}), then conditions \eqref{def_campo} and \eqref{def_bordo} have to be satisfied  by truncations of the  solution. Moreover it is  also  directly related  to  the definition given in \cite{MST2} in the {\it infinite energy} regime of the autonomous case.  A final comment on the weak way the boundary datum is assumed; if $T_k(u)$ does possesses a trace in the usual $BV$ sense (e.g. if $\gamma\leq 1$, as we shall see) then the Dirichlet boundary condition \eqref{def_bordo} reduces to the more easily readable one
$$
T_k (u) (1+[z,\nu] (x) )= 0\ \ \ \text{for  $\mathcal{H}^{N-1}$-a.e. } x \in \partial\Omega\,.
$$
\end{remark}

\medskip

We now state existence and uniqueness theorems in the case $f>0$:
\begin{theorem}\label{teomain}
	Let $0<f\in L^1(\Omega)$ and let $h$ satisfy \eqref{h1} and \eqref{h2} with $h(\infty)=0$. Then there exists a solution $u$ to problem \eqref{pb} in the sense of Definition \ref{weakdef} which is almost everywhere finite.
\end{theorem}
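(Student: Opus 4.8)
The plan is to follow the standard approximation-and-compactness scheme for $1$-Laplacian problems, but with careful attention to both the singularity of $h$ at the origin and its decay at infinity. First I would introduce the approximating problems
\begin{equation*}
\begin{cases}
\displaystyle -\Delta_p u_p = h_p(u_p) f_n(x) & \text{in }\Omega\,,\\
u_p = 0 & \text{on }\partial\Omega\,,
\end{cases}
\end{equation*}
where $p \to 1^+$, $f_n = T_n(f)$ is a bounded truncation of the datum, and $h_p$ is a bounded continuous truncation of $h$ (say $h_p(s) = \min(h(s), $ something $) \wedge T_{1/p}$-type regularization at the origin, or $h(s+1/n)$ capped), chosen so that each approximate problem admits a nonnegative weak solution $u_{p,n} \in W_0^{1,p}(\Omega) \cap L^\infty(\Omega)$ by standard results for the $p$-Laplacian with bounded data; positivity and a weak Harnack/strong maximum principle argument give $u_{p,n} > 0$ in $\Omega$, so that $h(u_{p,n})$ makes sense. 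The diagonal limit $p \to 1^+$ then $n \to \infty$ (or a joint limit) is the one producing the solution.

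\textbf{A priori estimates.} The core of the argument is the a priori estimates, and here the auxiliary function $\overline h$ from Subsection~\ref{sec:h} is essential: since $\overline h \ge h$, is decreasing, locally Lipschitz, and $\overline h(\infty) = h(\infty) = 0$, one can define a primitive-type quantity and use $1/\overline h(u_{p,n})$ (suitably truncated) or a related function like $\int_0^{u_{p,n}} \overline h(t)^{-1}\,dt$-flavored test functions where $u_{p,n}$ is large. Testing the equation with such functions, using that $\overline h(\infty)=0$ forces $1/\overline h$ to grow, should yield a global bound on $\int_\Omega |Du_{p,n}|$ on the set where $u_{p,n}$ is bounded away from zero (this is where the decay at infinity does its regularizing work — it is exactly analogous to how $h(\infty)=0$ lets one absorb the $L^1$ datum). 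Near the origin, assumption \eqref{h1} controls the blow-up rate $s^{-\gamma}$; testing with truncation-type functions $T_k(u_{p,n})^\sigma$ or powers adapted to $\sigma = \max(1,\gamma)$ gives local $BV$ bounds on $T_k(u_{p,n})$. The constant $\sigma$ appears precisely because test functions must vanish fast enough at the origin to kill the $c s^{-\gamma}$ singularity. These estimates are only local in general, consistent with the fact that no global $BV$ bound is expected (cf. Example~\ref{esempio1}).

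\textbf{Passage to the limit.} With $T_k(u_{p,n})$ bounded in $BV_{\rm loc}(\Omega)$ uniformly, I would extract (diagonally) a limit $u$ with $T_k(u) \in BV_{\rm loc}(\Omega)$, and $u$ a.e. finite — the a.e. finiteness being a consequence of the global gradient estimate on the large-values set together with Poincaré-type inequalities, again using $h(\infty)=0$. For the vector field, the quantities $z_{p,n} := |Du_{p,n}|^{p-2} Du_{p,n}$ are bounded in $L^{p'}$ and, after passing to the limit, converge weakly-$*$ to some $z \in L^\infty(\Omega)^N$ with $\|z\|_\infty \le 1$; the equation $-\operatorname{div} z = h(u) f$ as measures follows once one shows $h(u_{p,n}) f_n \to h(u) f$ in $L^1(\Omega)$, which requires the delicate step of controlling the contribution on $\{u_{p,n} \text{ small}\}$. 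This is where I expect the main obstacle: near the set $\{u = 0\}$ the nonlinearity can blow up, and one must show that $h(u_{p,n}) f_n$ does not concentrate there. The strategy (as the introduction hints) is to test with $V_\delta(u_{p,n})$-type functions — using that $V_\delta$ localizes to $\{u_{p,n} < 2\delta\}$ — combined with the auxiliary function to get an estimate of the form $\int_{\{u_{p,n} < 2\delta\}} h(u_{p,n}) f_n \le \omega(\delta)$ with $\omega(\delta) \to 0$ uniformly in $p, n$; positivity of $f$ (not just nonnegativity) is what makes this work and is why the theorem is stated for $f > 0$. Finally, the identification $(z, DT_k(u)) = |DT_k(u)|$ comes from lower semicontinuity applied to $\int \varphi\, |DT_k(u_{p,n})|$ against the weak convergence of $(z_{p,n}, DT_k(u_{p,n}))$ and a Young-inequality argument ($\frac{1}{p}|\xi|^p + \frac{1}{p'} \ge |\xi|$), while the boundary condition \eqref{def_bordo} is obtained by testing with functions not compactly supported and using the weak trace $[z,\nu]$, distinguishing the regime $\gamma \le 1$ (where $T_k(u)$ has a genuine $BV(\Omega)$ trace) from $\gamma > 1$ (where only the averaged condition survives).
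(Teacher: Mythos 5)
Your overall scheme is the same as the paper's: approximate by $p$-Laplacian problems with truncated $h$ and $f$, use the auxiliary function $\overline h$ to build a test function behaving like $1/\overline h(u_p)$ for large values (global estimate, hence a.e. finiteness of the limit because $\overline h(\infty)=0$), use $T_k^\sigma(u_p)$ and localized truncations for the remaining estimates, kill the contribution of $\{u_p\ \text{small}\}$ with $V_\delta(u_p)$ plus the strict positivity of $f$, and recover \eqref{def_campo} and \eqref{def_bordo} by Young's inequality, lower semicontinuity and the weak normal trace. However, one step fails as written: you claim that $z_p:=|\nabla u_p|^{p-2}\nabla u_p$ is bounded in $L^{p'}(\Omega)^N$, i.e.\ that $\int_\Omega|\nabla u_p|^p\,dx$ is uniformly bounded. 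No such global energy bound is available here: the only global estimates are \eqref{stimap}, where the weight $\Phi'(u_p)$ may degenerate as $u_p\to\infty$ (take $\overline h(s)\sim 1/\log s$), and \eqref{stimapTk} for powers of truncations; consistently, $u$ itself need not belong to $BV(\Omega)$ (Example \ref{esempio1}). The paper's Lemma \ref{lemma_campoz} gets around this by taking, for each $k$ and each $\omega\subset\subset\Omega$, weak $L^q(\omega)$ limits of $|\nabla T_k(u_p)|^{p-2}\nabla T_k(u_p)$ (bounded in every $L^q(\omega)$ by \eqref{stimalocalep}), obtaining fields $z_k$ with $\|z_k\|_{L^\infty}\le 1$, proving the compatibility relation $z_h\chi_{\{u<k\}}=z_k$ for $k<h$, and gluing the $z_k$ into a single field $z$ — a construction that uses precisely the a.e. finiteness of $u$. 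Without this (or an equivalent) device your vector field is not constructed.

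Two further points are glossed over. First, Definition \ref{weakdef} asks for $z\in\DM(\Omega)$ and $h(u)f\in L^1(\Omega)$ globally, while the limit procedure only gives the local versions directly; the paper upgrades them using the distributional equation together with Lemma 5.3 of \cite{dgop}. Second, when $\gamma>1$ (so $\sigma>1$) the admissible test functions and the global $BV$ information concern $T_k^\sigma(u)$, so the limit argument naturally yields $(z,DT_k^\sigma(u))=|DT_k^\sigma(u)|$; deducing \eqref{def_campo} for $T_k(u)$ is not merely "lower semicontinuity plus Young" but relies on the Radon--Nikod\'ym comparison of pairings in Proposition 4.5 of \cite{CDC}. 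Your treatment of the boundary condition (run the inequality for $T_k^\sigma$, then use H\"older to pass to the averaged condition for $T_k$) is in line with the paper. With the vector-field construction repaired along the lines above and these two ingredients added, your strategy coincides with the paper's proof.
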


\begin{theorem}\label{teounique}
	Let $0<f\in L^1(\Omega)$ and let $h$ be decreasing. Then there exists at most one solution $u$ to \eqref{pb} in the sense of Definition \ref{weakdef} in the class $T_k^\sigma(u)\in BV(\Omega)$ for any $k>0$.
\end{theorem}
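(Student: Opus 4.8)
The plan is to argue by contradiction: assume $u_1,u_2$ are two solutions in the stated class, with associated vector fields $z_1,z_2\in\DM(\Omega)$, $\|z_i\|_\infty\le 1$, satisfying $-\operatorname{div}z_i=h(u_i)f$ and $(z_i,DT_k(u_i))=|DT_k(u_i)|$ for all $k>0$. The natural test function, suggested already in Subsection \ref{sec:h}, is built from $1/h$; since $h$ is decreasing, the composition $u\mapsto H(u)$ with $H$ a primitive of something like $1/h$ is monotone, and differences of such quantities control differences of the solutions. Concretely, I would use $T_k^\sigma(u_1)-T_k^\sigma(u_2)$ (which lies in $BV(\Omega)$ by hypothesis and vanishes on $\partial\Omega$ in the trace sense, because each $T_k^\sigma(u_i)$ does) as a test function in the weak formulation of the equation for $u_1$ minus that for $u_2$. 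The left-hand side produces, via the Green formula \eqref{green} and the boundary condition \eqref{def_bordo}, a term of the form
\begin{equation*}
\int_\Omega (z_1-z_2, D(T_k^\sigma(u_1)-T_k^\sigma(u_2)))\,,
\end{equation*}
while the right-hand side gives $\int_\Omega (h(u_1)-h(u_2))f\,(T_k^\sigma(u_1)-T_k^\sigma(u_2))$.

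The key point is a sign/monotonicity analysis of both sides. On the right, since $h$ is decreasing, $(h(u_1)-h(u_2))(T_k^\sigma(u_1)-T_k^\sigma(u_2))\le 0$ pointwise (where $u_1>u_2$ we have $h(u_1)\le h(u_2)$ and $T_k^\sigma(u_1)\ge T_k^\sigma(u_2)$, and symmetrically), so that term is $\le 0$; crucially, it is strictly negative on the set where $u_1\ne u_2$ and $f>0$, unless $h$ is locally constant there, a case that has to be handled separately. On the left, I would use the characterization \eqref{def_campo}: since $(z_i,DT_k(u_i))=|DT_k(u_i)|$ and $\|z_i\|_\infty\le1$, the pairing $(z_1-z_2,D(T_k^\sigma(u_1)-T_k^\sigma(u_2)))$ is a nonnegative measure — this is the standard "monotonicity of the $1$-Laplacian" computation, expressing that $z_i$ realizes the subdifferential of the total variation along $T_k(u_i)$, and it extends to $T_k^\sigma$ because $T_k^\sigma$ is an increasing Lipschitz reparametrization of $T_k$, so $DT_k^\sigma(u_i)$ and $DT_k(u_i)$ are mutually absolutely continuous with nonnegative Radon–Nikodym derivative. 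Hence the left side is $\ge 0$ and the right side is $\le 0$; both must vanish. From the vanishing of the right-hand side (letting $k\to\infty$) one concludes $h(u_1)=h(u_2)$ a.e. on $\{f>0\}$, hence $u_1=u_2$ a.e. on $\{f>0\}$ wherever $h$ is strictly decreasing; on the remaining region one exploits the vanishing of the left-hand-side gradient term together with the boundary condition to propagate the equality.

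The main obstacle, I expect, will be the passage through the Green formula with only \emph{local} $BV$ regularity of the $u_i$ themselves (only truncations are globally $BV$) and the delicate treatment of the boundary term: one must verify that $(T_k^\sigma(u_1)-T_k^\sigma(u_2))^*$ is integrable against $\operatorname{div}z_i=-h(u_i)f$ so that Lemma with \eqref{green} applies, and then that the boundary contribution $\int_{\partial\Omega}[(T_k^\sigma(u_1)-T_k^\sigma(u_2))z_i,\nu]\,d\mathcal H^{N-1}$ has a favorable sign (or vanishes) using \eqref{des2}, \eqref{def_bordo}, and the fact that $T_k^\sigma(u_i)$ has zero trace when $\gamma\le\sigma$; since $\sigma=\max(1,\gamma)$, the trace of $T_k^\sigma(u_i)$ is indeed well defined and one needs to check it is zero, which should follow from $T_k(u_i)\in BV_{\rm loc}$ plus the alternative in \eqref{def_bordo}. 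A secondary technical difficulty is the borderline case in which $h$ is constant on some interval: there the right-hand side does not force pointwise equality of $u_1,u_2$ there, and one must instead return to the equation $-\operatorname{div}z_1=-\operatorname{div}z_2$ on that region, use \eqref{def_campo} to identify $z_1=z_2$ on the relevant level sets, and close the argument via uniqueness for the resulting (autonomous-type) problem with the given boundary data. I would organize the write-up so that the monotonicity inequality on the left and the sign of the right-hand term are isolated as the two structural facts, and the boundary/truncation bookkeeping is the routine (if lengthy) remainder.
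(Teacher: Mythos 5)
Your main line is essentially the paper's own argument: take $v=T_k^\sigma(u_1)-T_k^\sigma(u_2)\in BV(\Omega)\cap L^\infty(\Omega)$ in the difference of the two equations (legitimate since $\operatorname{div}z_i=-h(u_i)f\in L^1(\Omega)$), apply the Green formula \eqref{green} and \eqref{des2}, upgrade \eqref{def_campo} to $(z_i,DT_k^\sigma(u_i))=|DT_k^\sigma(u_i)|$ (the paper performs exactly your Radon--Nikod\'ym step by invoking Proposition 4.5 of \cite{CDC}, i.e.\ \eqref{cdc}), dominate the cross pairings by \eqref{finitetotal1}, and conclude from $\int_\Omega(h(u_1)-h(u_2))f(T_k^\sigma(u_1)-T_k^\sigma(u_2))\,dx=0$, $f>0$ and the strict monotonicity of $h$ that $T_k^\sigma(u_1)=T_k^\sigma(u_2)$ for every $k$, hence $u_1=u_2$.

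Three points should be corrected. First, your claim that the test function ``vanishes on $\partial\Omega$ in the trace sense, because each $T_k^\sigma(u_i)$ does'' is false: a solution need not have zero trace (in Example \ref{esempio1} the solution is strictly positive up to $\partial B_R(0)$ and \eqref{def_bordo} holds through $[z,\nu]=-1$). The boundary terms therefore do not drop out; what saves the argument --- and what the paper does, and what you yourself gesture at later --- is that after using the alternative in \eqref{def_bordo} they regroup as $\int_{\partial\Omega}T_k^\sigma(u_1)\,(1+[z_2,\nu])\,d\mathcal H^{N-1}+\int_{\partial\Omega}T_k^\sigma(u_2)\,(1+[z_1,\nu])\,d\mathcal H^{N-1}\ge 0$, because $[z_i,\nu]\ge-1$; so drop the zero-trace claim and keep only the sign argument. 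Second, the ``$h$ locally constant'' case you set aside does not arise: ``decreasing'' is meant (and must be meant) in the strict sense, since for constant $h$ the problem is the autonomous one, for which the paper itself recalls the structural non-uniqueness under monotone transformations of $u$; in particular your proposed fallback --- identify $z_1=z_2$ and invoke uniqueness for the resulting autonomous-type problem --- cannot be made to work. Third, ``mutually absolutely continuous'' overstates what holds when $\sigma>1$ (the density of $DT_k^\sigma(u)$ with respect to $DT_k(u)$ may vanish on the diffuse part of $\{u=0\}$); only $|DT_k^\sigma(u_i)|\ll|DT_k(u_i)|$ is needed, and this, together with the equality of Radon--Nikod\'ym derivatives, is precisely what \cite{CDC} provides.
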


We also have the following  results that fit with  the known nonexistence ones and that highlight the regularization effect produced by the nonlinear term:
\begin{proposition}\label{propro}
	If $u$ is a solution to \eqref{pb}, then $h(u)f\in W^{-1,\infty}(\Omega)$ and $||h(u)f||_{W^{-1,\infty}(\Omega)}\le 1$.
\end{proposition}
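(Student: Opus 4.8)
The statement is a direct reading of the notion of solution through the duality between $W^{-1,\infty}(\Omega)$ and $W_0^{1,1}(\Omega)$, and I do not expect a genuine obstacle; the whole point is to turn the vector field $z$ provided by Definition \ref{weakdef} into the required dual bound. Recall that, by Definition \ref{weakdef}, a solution $u$ comes with a field $z\in\DM(\Omega)$ satisfying $\|z\|_{L^\infty(\Omega)^N}\le 1$, $h(u)f\in L^1(\Omega)$ and $-\operatorname{div}z=h(u)f$ as measures in $\Omega$; in particular $\operatorname{div}z$ is absolutely continuous with respect to the Lebesgue measure, with density $-h(u)f\in L^1(\Omega)$, so that the distribution $h(u)f$ acts on $C_c^\infty(\Omega)$ in the usual way.

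First I would test the distributional identity \eqref{def_eqdistr} against an arbitrary $\varphi\in C_c^\infty(\Omega)$: by the very definition of distributional divergence,
\[
\int_\Omega h(u)f\,\varphi\,dx=\langle-\operatorname{div}z,\varphi\rangle=\int_\Omega z\cdot\nabla\varphi\,dx .
\]
Then, using $\|z\|_{L^\infty(\Omega)^N}\le 1$,
\[
\left|\int_\Omega h(u)f\,\varphi\,dx\right|\le\|z\|_{L^\infty(\Omega)^N}\int_\Omega|\nabla\varphi|\,dx\le\int_\Omega|\nabla\varphi|\,dx .
\]
Since $\int_\Omega|\nabla\varphi|\,dx$ is an equivalent norm on $W_0^{1,1}(\Omega)$ (by Poincaré's inequality), this says that $\varphi\mapsto\int_\Omega h(u)f\,\varphi\,dx$ is a bounded linear functional on $C_c^\infty(\Omega)$ for the $W_0^{1,1}$-topology, of norm at most $1$.

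To conclude, I would invoke the density of $C_c^\infty(\Omega)$ in $W_0^{1,1}(\Omega)$: the functional above extends uniquely to a continuous linear functional on $W_0^{1,1}(\Omega)$, still of norm at most $1$, and this extension is by construction the element of $W^{-1,\infty}(\Omega)$ represented by the distribution $h(u)f$. Equivalently, and even more directly, one may just observe that $h(u)f=-\operatorname{div}z$ with $z\in L^\infty(\Omega)^N$ and $\|z\|_{L^\infty(\Omega)^N}\le 1$, which is precisely the assertion that $h(u)f$ lies in the closed unit ball of $W^{-1,\infty}(\Omega)$. Either way one gets $h(u)f\in W^{-1,\infty}(\Omega)$ with $\|h(u)f\|_{W^{-1,\infty}(\Omega)}\le 1$. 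The only point worth a line of care — already settled by \eqref{def_eqdistr} — is that $\operatorname{div}z$ is an honest $L^1$ function, so that both sides of the first displayed identity make sense to begin with; everything else is the standard $C_c^\infty$–$W_0^{1,1}$ duality.
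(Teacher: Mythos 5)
Your proof is correct and rests on exactly the same two facts as the paper's own argument: the distributional identity $-\operatorname{div}z=h(u)f$ from \eqref{def_eqdistr} and the bound $\|z\|_{L^\infty(\Omega)^N}\le 1$. The difference is in the implementation. You test only with $\varphi\in C_c^\infty(\Omega)$ and then extend by density of $C_c^\infty(\Omega)$ in $W_0^{1,1}(\Omega)$, whereas the paper tests the identity directly with $|T_k(v)|$ for an arbitrary $v\in W_0^{1,1}(\Omega)$ (legitimate because $|T_k(v)|\in W_0^{1,1}(\Omega)\cap L^\infty(\Omega)$ and $\operatorname{div}z\in L^1(\Omega)$) and then lets $k\to\infty$ by monotone convergence. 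The truncation route buys something slightly stronger than the abstract dual membership you obtain: it shows that $h(u)f\,|v|\in L^1(\Omega)$ with $\int_\Omega h(u)f\,|v|\,dx\le\int_\Omega|\nabla v|\,dx$ for every $v\in W_0^{1,1}(\Omega)$, so the action of $h(u)f$ on all of $W_0^{1,1}(\Omega)$ is realized by an absolutely convergent integral, not merely by a density extension that a priori coincides with the integral pairing only on smooth functions (recall $h(u)f$ is just an $L^1$ function, so $h(u)f\,v$ need not be integrable for a generic $v\in W_0^{1,1}(\Omega)$ without this extra step). This integral form of the bound is the one actually invoked later, e.g.\ in the nonexistence discussion of Section \ref{hpositiva}. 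So your proof does establish the proposition under the standard reading of $W^{-1,\infty}(\Omega)$-membership for a distribution; if you want the statement in the stronger, integral-represented form, add the paper's cheap truncation-and-Fatou step.
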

\begin{proposition}\label{propreg}
	Let $u$ be the solution to \eqref{pb} found in Theorem \ref{teomain}. Then it holds that $T_k^\sigma(u)\in BV(\Omega)$ for any $k>0$. Moreover if $h(0)=\infty$ then $u>0$ almost everywhere in $\Omega$.
\end{proposition}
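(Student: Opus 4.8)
The plan is to argue on the approximating sequence already produced in the proof of Theorem~\ref{teomain}, namely solutions $u_n$ of $p_n$--Laplacian problems with $p_n\to 1^+$, datum $0\le f_n\le f$ (e.g. $f_n=T_n(f)$) and a non--singular approximation $h_n$ of $h$, for which $u_n\to u$ a.e. in $\Omega$. Fix $k>0$ and use $\varphi=T_k^\sigma(u_n)\in W_0^{1,p_n}(\Omega)\cap L^\infty(\Omega)$ as a test function in the $n$--th approximate equation; this is admissible because $h_n$ is bounded. Since $\nabla T_k^\sigma(u_n)=\sigma\,T_k^{\sigma-1}(u_n)\,\chi_{\{u_n<k\}}\,\nabla u_n$, the diffusion term equals $\sigma\int_{\{u_n<k\}}T_k^{\sigma-1}(u_n)|\nabla u_n|^{p_n}\,dx$, while the right--hand side is $\int_\Omega h_n(u_n)f_nT_k^\sigma(u_n)\,dx$. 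The point is that, because $\sigma\ge\gamma$, assumption \eqref{h1} together with the boundedness of $h$ on $[s_1,\infty)$ (which follows from continuity and from $h(\infty)$ being finite) gives $h_n(u_n)\,T_k^\sigma(u_n)\le C(k)$ a.e.; hence the right--hand side is $\le C(k)\|f\|_{L^1(\Omega)}$, uniformly in $n$. A Young--type inequality $T_k^{\sigma-1}(u_n)|\nabla u_n|\le \tfrac1{p_n}T_k^{\sigma-1}(u_n)|\nabla u_n|^{p_n}+\tfrac1{p_n'}T_k^{\sigma-1}(u_n)$, whose last term is controlled by $\tfrac1{p_n'}k^{\sigma-1}|\Omega|\to0$, then converts this into
\begin{equation*}
\int_\Omega|\nabla T_k^\sigma(u_n)|\,dx\le C(k)\qquad\text{for every }n\,,
\end{equation*}
so that $T_k^\sigma(u_n)$ is bounded in $BV(\Omega)$ (it is obviously bounded in $L^\infty(\Omega)$).

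Passing to a subsequence, $T_k^\sigma(u_n)\to w$ in $L^1(\Omega)$ and a.e.; since $u_n\to u$ a.e., $w=T_k^\sigma(u)$. Each $T_k^\sigma(u_n)$ has zero trace on $\partial\Omega$, so extending by zero and invoking the $L^1$--lower semicontinuity of $v\mapsto\int_\Omega|Dv|+\int_{\partial\Omega}|v|\,d\mathcal H^{N-1}$ yields $\int_\Omega|DT_k^\sigma(u)|+\int_{\partial\Omega}|T_k^\sigma(u)|\,d\mathcal H^{N-1}\le C(k)$; in particular $T_k^\sigma(u)\in BV(\Omega)$ for every $k>0$. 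For the second assertion, assume by contradiction that $|\{u=0\}|>0$. Since $h$ is finite off the origin and $h(0)=\infty$, one has $h(u)=+\infty$ exactly on $\{u=0\}$; as $f>0$ a.e., the set $\{u=0\}\cap\{f>0\}$ still has positive measure, and there $h(u)f\equiv+\infty$, so $\int_\Omega h(u)f=+\infty$. This contradicts the requirement $h(u)f\in L^1(\Omega)$ in Definition~\ref{weakdef}, hence $u>0$ a.e. in $\Omega$.

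The main obstacle is the uniform bound of the first paragraph: the right--hand side is controlled only because the truncation is raised to the power $\sigma=\max(1,\gamma)$, which is exactly what makes the (possibly singular) product $h_n(u_n)T_k^\sigma(u_n)$ bounded near the set $\{u_n=0\}$ by virtue of \eqref{h1}; one must also take care that the passage $p_n\to1^+$ (the Young inequality step) and the handling of the boundary trace in the limit keep all constants independent of $n$. The positivity part, by contrast, is immediate once $h(u)f\in L^1(\Omega)$ and $f>0$ are known.
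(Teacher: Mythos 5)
Your proposal is correct and follows essentially the paper's own route: the $BV$ bound comes from testing the approximate problems with $T_k^\sigma(u_p)$, using \eqref{h1} with $\sigma\ge\gamma$ to control the right-hand side, Young's inequality to pass from the $p$-energy to the total variation, and lower semicontinuity in the limit (this is exactly the content of Lemmas \ref{lemmapotenzaTk} and \ref{stimeBV} together with Corollary \ref{exu}, which the paper simply cites); the positivity is obtained, as in the paper, from the integrability of $h(u)f$ combined with $f>0$ and $h(0)=\infty$. The only cosmetic difference is that you estimate $T_k^\sigma(u_p)$ directly rather than the $p$-dependent power $T_k^{(\sigma-1+p)/p}(u_p)$ used in the paper, which changes nothing in substance.
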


\section{Proof of the results of Section \ref{main}}

\subsection{Approximation scheme and a priori estimates}

For $p>1$, we consider the following scheme of approximation
\begin{equation}
\label{pbp}
\begin{cases}
\displaystyle -\Delta_p u_p  = h_p(u_p)f_p (x) & \text{in}\;\Omega\,,\\
u_p=0 & \text{on}\;\partial\Omega\,,
\end{cases}
\end{equation}
where $h_p(s):= T_{\frac{1}{p-1}}\left(h(s)\right)$ and $f_p= T_{\frac{1}{p-1}}\left(f\right)$. The existence of a nonnegative weak solution $u_p\in W^{1,p}_0(\Omega)\cap L^\infty(\Omega)$ is guaranteed by \cite{ll}. We highlight that, since we are taking $p\to 1^+$, without loss of generality we can assume that $1<p<2$. The a priori estimates on the approximate solutions  do not depend of the sign of the datum $f$ so that they shall be established  for general nonnegative $f$'s and they'll  also be used in Section \ref{fnon}.

\medskip

For a reason that will be clear in a moment,   we consider the function $\overline h$ constructed in Section \ref{sec:h} relative to  ${g}(s) = T_l(h(s))$  and $l= \displaystyle \sup_{s\in [s_1,\infty)} h(s)$ (i.e. we cut $h$ only near zero) and we set $s_2>\max\{1, s_1\}$ satisfying $\overline h(s_2)<s_1^{-\sigma}$; the function $\overline h$ is locally Lipschitz--continuous  and satisfies $\overline{h}(s)\ge h(s)$ for any $s\ge s_2$ and $\displaystyle \lim_{s\to \infty}\overline{h}(s)= \lim_{s\to \infty} h(s)$. In order to get a first basic estimate we need a test function that mimics $\frac{1}{h(u)}$, to this end we define
\begin{equation*}\label{phi}
\Phi(s) := \begin{cases}
s^{\sigma}, \ \ &s < s_1\,,\\
\Psi(s), \ \ &s_1\le s\le s_2\,,\\		
1/\overline{h}(s), \ \ &s>s_2\,,
\end{cases}
\end{equation*}
where $\Psi$ is a Lipschitz increasing function which makes $\Phi$ locally Lipschitz as well. Moreover we denote by
\begin{equation*}
\displaystyle
\Gamma_p(s) := \int_{0}^{s} \Phi'(t)^{\frac{1}{p}} \ dt\,.
\end{equation*}

We have the following easy but fundamental a priori estimate:
\begin{lemma}\label{lemmaphi}
Let $h$ satisfy \eqref{h1} and \eqref{h2} and let $f\in L^1(\Omega)$ be nonnegative. Let $u_p$ be a solution of \eqref{pbp} then
\begin{equation}\label{stimap}
||\Gamma_p(u_p)||_{W^{1,p}_0(\Omega)}\le C\,,
\end{equation}
for some constant $C$ which does not depend on $p$.
\end{lemma}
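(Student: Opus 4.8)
The plan is to use $\Phi(u_p)$ itself as a test function in the weak formulation of \eqref{pbp}. Since $\Phi$ is locally Lipschitz with $\Phi(0)=0$ and $u_p\in W^{1,p}_0(\Omega)\cap L^\infty(\Omega)$, the function $\Phi(u_p)$ belongs to $W^{1,p}_0(\Omega)\cap L^\infty(\Omega)$ and is admissible (the right-hand side $h_p(u_p)f_p$ lies in $L^\infty(\Omega)$, hence in the dual of $W^{1,p}_0(\Omega)$). By the chain rule $\nabla\Phi(u_p)=\Phi'(u_p)\nabla u_p$, so the left-hand side of the weak formulation equals $\int_\Omega \Phi'(u_p)|\nabla u_p|^p\,dx$; on the other hand $\Gamma_p$ is locally Lipschitz (because $\Phi'^{1/p}$ is locally bounded), $\Gamma_p(0)=0$, and $\nabla\Gamma_p(u_p)=\Phi'(u_p)^{1/p}\nabla u_p$, whence $|\nabla\Gamma_p(u_p)|^p=\Phi'(u_p)|\nabla u_p|^p$. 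Therefore
\[
\int_\Omega|\nabla\Gamma_p(u_p)|^p\,dx=\int_\Omega h_p(u_p)f_p\,\Phi(u_p)\,dx\le\int_\Omega h(u_p)\Phi(u_p)\,f\,dx\,,
\]
where we used $h_p\le h$ and $f_p\le f$.

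The next step is the pointwise bound $h(s)\Phi(s)\le C$ for every $s\ge 0$, with $C$ depending only on the data; this is precisely what the choices of $\Phi$, of $\sigma$ and of $s_2$ were designed for. For $s<s_1$, \eqref{h1} gives $h(s)\Phi(s)\le c\,s^{\sigma-\gamma}$, which is bounded on $(0,s_1)$ since $\sigma\ge\gamma$. For $s_1\le s\le s_2$ both $h$ and $\Psi$ are continuous (and $h$ is finite) on the compact interval $[s_1,s_2]$, hence bounded there. For $s>s_2$ one has $h(s)\Phi(s)=h(s)/\overline h(s)\le 1$, because $\overline h(s)\ge h(s)$ there by construction. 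Substituting this bound yields $\int_\Omega|\nabla\Gamma_p(u_p)|^p\,dx\le C\|f\|_{L^1(\Omega)}$.

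Finally, since $\Gamma_p(u_p)\in W^{1,p}_0(\Omega)$ with $\Gamma_p(0)=0$, the Sobolev--Poincar\'e inequality gives $\|\Gamma_p(u_p)\|_{L^p(\Omega)}\le |\Omega|^{1/N}\,S(N,p)\,\|\nabla\Gamma_p(u_p)\|_{L^p(\Omega)}$, and, since we may assume $1<p<2$, the embedding constant $S(N,p)$ stays bounded for $p$ in this range (the case $N=1$ being elementary). Combining the three steps gives $\|\Gamma_p(u_p)\|_{W^{1,p}_0(\Omega)}\le C$ with $C$ independent of $p$. The step requiring most care — rather than a genuine obstacle, this being the ``easy but fundamental'' estimate — is exactly the uniformity of all constants as $p\to 1^+$: analytically it is guaranteed by the construction of $\overline h$ (controlling $h\Phi$ at infinity) together with \eqref{h1} and the exponent $\sigma$ (controlling $h\Phi$ near the origin), and functionally by the fact that the Sobolev--Poincar\'e constant for $W^{1,p}_0(\Omega)$ is uniform for $p$ bounded away from $1$ and $N$.
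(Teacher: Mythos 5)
Your proof is correct and follows essentially the same route as the paper: testing with $\Phi(u_p)$, using the chain rule to produce $|\nabla\Gamma_p(u_p)|^p$, and bounding $h\,\Phi$ separately on $\{s<s_1\}$ (via \eqref{h1} and $\sigma\ge\gamma$), on $[s_1,s_2]$ (by continuity), and on $\{s>s_2\}$ (via $\overline h\ge h$). Your extra remark on the uniformity of the Sobolev--Poincar\'e constant for $1<p<2$ is a harmless addition that the paper leaves implicit.
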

\begin{proof} Recalling that $\sigma\geq 1$,  one observes that $\Gamma_p(0) = 0$ and that $u_p$ has zero trace, so that $\Gamma_p(u_p)\in W_0^{1,p}(\Omega)$.

	Let us take $\Phi(u_p)$ as a test function in \eqref{pbp} and use the properties of $\overline h$,  obtaining
	$$
	\begin{aligned}
		\int_\Omega |\nabla \Gamma_p(u_p)|^p\, dx &= \int_{\Omega} |\nabla u_p|^p \Phi'(u_p)\, dx = \int_{\Omega} h_p(u_p) f_p \Phi(u_p)\, dx
		\\
		& \le c\int_{\{u_p\le s_1\}} f\, dx+ \left[\max_{s\in (s_1,s_2)}h(s)\right] \Phi(s_2) \int_{\{s_1< u_p< s_2 \}} f \, dx+ \int_{\{u_p\ge s_2\}} f\, dx \le C\,,
	\end{aligned}
	$$
	for some positive constant $C$ which does not depend on $p$.
\end{proof}

Here is the second {\it global} a priori estimate that involves the truncations of $u_p$:

\begin{lemma}\label{lemmapotenzaTk}
	Let $h$ satisfy \eqref{h1} and \eqref{h2}, $f\in L^1(\Omega)$ be nonnegative, and let us fix $k>0$. Then $u_p$ satisfies
	\begin{equation}\label{stimapTk}
	||T_k^\frac{\sigma-1+p}{p}(u_p)||_{W^{1,p}_0(\Omega)}\le C_k\,,
	\end{equation}
	for some constant $C_k$ which does not depend on $p$.
\end{lemma}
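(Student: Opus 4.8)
The plan is to test the equation \eqref{pbp} with a power of the truncation $T_k(u_p)$, mimicking the computation in Lemma~\ref{lemmaphi} but now localizing to the region $\{u_p\le k\}$ where the singularity of $h$ near the origin must be absorbed. Concretely, I would take $\varphi := T_k^{\sigma}(u_p)$ (or, to be safe with the exponents, $T_k(u_p)^{\sigma}$) as test function in \eqref{pbp}; this is an admissible test function since $u_p\in W^{1,p}_0(\Omega)\cap L^\infty(\Omega)$, it vanishes on $\partial\Omega$, and $t\mapsto t^\sigma$ is Lipschitz on the bounded interval $[0,k]$ because $\sigma\ge 1$. On the left-hand side one computes
\[
\int_\Omega |\nabla u_p|^p\,\nabla\!\big(T_k^\sigma(u_p)\big)\cdot\frac{\nabla u_p}{|\nabla u_p|}\,\frac{1}{|\nabla u_p|}\,dx
= \sigma\int_{\{u_p<k\}} |\nabla u_p|^p\, u_p^{\sigma-1}\,dx
= \sigma\Big(\tfrac{p}{\sigma-1+p}\Big)^p\int_\Omega \big|\nabla T_k^{\frac{\sigma-1+p}{p}}(u_p)\big|^p\,dx,
\]
which is exactly (a constant times) the quantity we wish to bound; here I used $\nabla\big(T_k^{\frac{\sigma-1+p}{p}}(u_p)\big) = \frac{\sigma-1+p}{p}\,T_k(u_p)^{\frac{\sigma-1}{p}}\nabla T_k(u_p)$ and that on $\{u_p\ge k\}$ the gradient of $T_k(u_p)$ vanishes a.e.

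It remains to bound the right-hand side $\int_\Omega h_p(u_p) f_p\, T_k^\sigma(u_p)\,dx$ independently of $p$. I would split according to the threshold $s_1$ from \eqref{h1}. On $\{u_p\le s_1\}$, the growth condition \eqref{h1} gives $h_p(u_p)\le h(u_p)\le c\, u_p^{-\gamma}$, so $h_p(u_p)\, T_k^\sigma(u_p) \le c\, u_p^{\sigma-\gamma}\le c\, s_1^{\sigma-\gamma}$ is bounded because $\sigma=\max(1,\gamma)\ge\gamma$; hence this part is controlled by $c\,s_1^{\sigma-\gamma}\,\|f\|_{L^1(\Omega)}$. On $\{u_p>s_1\}$ we are away from the origin: since $h$ has a finite limit at infinity by \eqref{h2} and is continuous on $[s_1,\infty)$, it is bounded there, say $h\le l$, so $h_p(u_p)\le l$ and $T_k^\sigma(u_p)\le k^\sigma$, giving a bound $l\,k^\sigma\,\|f\|_{L^1(\Omega)}$ for this part. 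Adding the two contributions yields
\[
\int_\Omega h_p(u_p) f_p\, T_k^\sigma(u_p)\,dx \le \big(c\,s_1^{\sigma-\gamma} + l\,k^\sigma\big)\|f\|_{L^1(\Omega)} =: C_k,
\]
a constant depending on $k$ (and on the data $c, s_1, \gamma, l, \|f\|_{L^1}$) but not on $p$. Combining with the left-hand side identity and absorbing the $p$-dependent constant $\sigma\big(\frac{p}{\sigma-1+p}\big)^p$ — which stays bounded above and below for $1<p<2$ — into the constant gives \eqref{stimapTk}.

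The main obstacle, and the reason this estimate is only \emph{global in $L^1$} and not stronger, is the interplay between the exponent $\sigma$ and the singular term: the whole point of choosing the test function exponent equal to $\sigma=\max(1,\gamma)$ is precisely that it is the smallest power of $T_k(u_p)$ that kills the singularity $u_p^{-\gamma}$ near the origin while still producing a usable power $\frac{\sigma-1+p}{p}$ of $u_p$ on the left. One must check carefully that $t\mapsto t^{\sigma}$ and $t\mapsto t^{\frac{\sigma-1+p}{p}}$ are genuinely Lipschitz on $[0,k]$ (true since both exponents are $\ge 1$, using $\sigma\ge 1$ and $p>1$) so that the chain rule for Sobolev functions applies and $T_k^{\frac{\sigma-1+p}{p}}(u_p)\in W^{1,p}_0(\Omega)$; this is where $\sigma\ge 1$, rather than merely $\sigma\ge\gamma$, is essential. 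No compactness or limit passage is needed here — it is a pure energy estimate on the approximating problem — so the only real care is bookkeeping the constants to confirm their independence of $p$.
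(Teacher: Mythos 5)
Your proposal is correct and follows essentially the same route as the paper: test \eqref{pbp} with $T_k^\sigma(u_p)$, identify the left-hand side as $\sigma\big(\tfrac{p}{\sigma-1+p}\big)^p\int_\Omega|\nabla T_k^{\frac{\sigma-1+p}{p}}(u_p)|^p\,dx$, and split the right-hand side over $\{u_p\le s_1\}$ (where \eqref{h1} and $\sigma\ge\gamma$ absorb the singularity) and $\{u_p>s_1\}$ (where $h$ is bounded), giving a bound by a constant times $\|f\|_{L^1(\Omega)}$ independent of $p$. Your extra checks on the admissibility of the test function and the uniform boundedness of the constant for $1<p<2$ are consistent with, and slightly more detailed than, the paper's argument.
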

\begin{proof}
	Let us take $T_k^\sigma(u_p)$ as a test function in \eqref{pbp} yielding to
	$$
	\begin{aligned}
	\sigma\left(\frac{p}{\sigma-1+p}\right)^p\int_\Omega |\nabla T_k^\frac{\sigma-1+p}{p}(u_p)|^p\, dx &= \int_{\Omega} h_p(u_p) f_p T_k^\sigma(u_p)\, dx
	\\
	&\le c\int_{\{u_p\le s_1\}} f\, dx + \left[\sup_{s\in [s_1,\infty)}h(s)\right] \ k^\sigma\int_{\{u_p> s_1\}} f \, dx\le C_k\,,
	\end{aligned}
	$$	
	for some positive constant $C_k$ which does not depend on $p$.
\end{proof}

As we saw if $\gamma>1$ then only a power of the truncations of $u_p$ is bounded globally; this is a well known phenomenon  in singular type problems  (see e.g. \cite{BO,OP}). More in general we have the following local estimates

\begin{lemma}\label{lemmalocaleTk}
	Let $h$ satisfy \eqref{h1} and \eqref{h2} and let $f\in L^1(\Omega)$ be nonnegative.  Then for any $\omega \subset \subset \Omega$
	\begin{equation}\label{stimalocalep}
	||T_k(u_p)||_{W^{1,p}(\omega)}\le C_\omega\,,
	\end{equation}
	for some constant $C_\omega$ which does not depend on $p$.
\end{lemma}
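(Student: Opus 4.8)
The plan is to bootstrap the desired \emph{local} bound from the \emph{global} truncation estimate \eqref{stimapTk}: on the part of $\Omega$ where $u_p$ is bounded away from $0$ the estimate \eqref{stimapTk} does the job directly, while the complementary ``small'' region has to be treated by a localized Caccioppoli-type test, and it is exactly this step that forces the statement to be purely local when $\gamma>1$. First I would dispose of the case $\sigma=1$ (i.e. $\gamma\le1$): the exponent $\frac{\sigma-1+p}{p}$ appearing in \eqref{stimapTk} then equals $1$, so Lemma \ref{lemmapotenzaTk} already yields $\|T_k(u_p)\|_{W^{1,p}_0(\Omega)}\le C_k$, which is stronger than \eqref{stimalocalep}. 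Hence I assume $\sigma>1$, I fix $k>0$, I choose an auxiliary parameter $\delta\in(0,1)$ with $\delta<k$, and I abbreviate $\beta:=\frac{\sigma-1+p}{p}$, noting that $1\le\beta\le\sigma$ and $(\beta-1)p=\sigma-1$.

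On $\{u_p\ge\delta\}$ I would invoke \eqref{stimapTk}. Since $\nabla T_k^{\beta}(u_p)=\beta\,T_k(u_p)^{\beta-1}\nabla T_k(u_p)$ vanishes on $\{u_p\ge k\}$ and $T_k(u_p)=u_p\ge\delta$ on $\{\delta\le u_p<k\}$, I obtain
\[
\int_{\{u_p\ge\delta\}}|\nabla T_k(u_p)|^p\,dx=\int_{\{\delta\le u_p<k\}}|\nabla u_p|^p\,dx\le\frac{\delta^{-(\sigma-1)}}{\beta^p}\int_\Omega|\nabla T_k^{\beta}(u_p)|^p\,dx\le\delta^{-(\sigma-1)}C_k,
\]
a bound that is already global and independent of $p$ (recall $\beta\ge1$).

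For the small region I would take $\varphi\in C_c^1(\Omega)$ with $0\le\varphi\le1$ and $\varphi\equiv1$ on $\omega$, and test the weak formulation of \eqref{pbp} against $\varphi^p\bigl(\delta-T_\delta(u_p)\bigr)$; this is admissible because it belongs to $W^{1,p}_0(\Omega)\cap L^\infty(\Omega)$ — the cut-off $\varphi$ being essential here, since $\delta-T_\delta(u_p)$ does not vanish on $\partial\Omega$. Using $\nabla\bigl(\delta-T_\delta(u_p)\bigr)=-\nabla u_p\,\chi_{\{u_p<\delta\}}$ and discarding the \emph{nonnegative} term $\int_\Omega h_p(u_p)f_p\,\varphi^p(\delta-T_\delta(u_p))\,dx$ produced by the right-hand side, the weak formulation reduces to
\[
\int_{\{u_p<\delta\}}\varphi^p|\nabla u_p|^p\,dx\le p\,\delta\int_{\{u_p<\delta\}}\varphi^{p-1}|\nabla u_p|^{p-1}|\nabla\varphi|\,dx .
\]
Young's inequality with exponents $\frac{p}{p-1}$ and $p$ bounds the right-hand side by $\frac12\int_{\{u_p<\delta\}}\varphi^p|\nabla u_p|^p\,dx+C\,\delta^p\int_\Omega|\nabla\varphi|^p\,dx$ with $C$ uniform for $1<p<2$ (the absorbing coefficient is $\frac{p-1}{p}<\frac12$). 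Absorbing the first term and using $\int_\Omega|\nabla\varphi|^p\,dx\le C_\omega$ for $1<p<2$ gives $\int_{\{u_p<\delta\}\cap\omega}|\nabla u_p|^p\,dx\le C_\omega$. Adding the two regions and using $\int_\omega|T_k(u_p)|^p\,dx\le k^p|\omega|$ then yields \eqref{stimalocalep}.

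The only genuinely delicate point is the small-$u_p$ region: for $\gamma>1$ one cannot hope for a \emph{global} gradient bound on $T_k(u_p)$, and \eqref{stimapTk} controls only $T_k^{\beta}(u_p)$ with $\beta>1$, which degenerates near $\{u_p=0\}$. What rescues the local estimate is the sign: tested against the nonnegative function $\delta-T_\delta(u_p)$, the singular datum $h_p(u_p)f_p$ contributes with the favourable sign and can simply be dropped, leaving only a harmless cross term to be absorbed.
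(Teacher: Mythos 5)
Your proof is correct, but it is a more roundabout version of the paper's argument. The mechanism you use on the small region --- testing \eqref{pbp} with $\varphi^p(\delta-T_\delta(u_p))$ so that the nonnegative reaction term $h_p(u_p)f_p$ enters with a favourable sign and can be discarded, then Young's inequality and absorption of the gradient term --- is precisely the paper's proof, except that the paper runs it directly at level $k$: it tests with $(T_k(u_p)-k)\varphi^p$ (the negative of your test function with $\delta$ replaced by $k$), drops the nonpositive right-hand side, and obtains \eqref{stimalocalep} in one stroke, for every $\gamma\ge 0$, with no case distinction and no decomposition of $\Omega$. Consequently your splitting into $\{u_p\ge\delta\}$ and $\{u_p<\delta\}$, the separate treatment of $\sigma=1$, and the appeal to the global estimate \eqref{stimapTk} of Lemma \ref{lemmapotenzaTk} are all superfluous: had you taken $\delta=k$ in your Caccioppoli step you would have recovered the full local bound immediately, since $\nabla T_k(u_p)=\nabla u_p\,\chi_{\{u_p<k\}}$ a.e. The extra machinery buys nothing (and costs you the factor $\delta^{-(\sigma-1)}$), but every individual step you write is verified correctly --- the pointwise bound $|\nabla u_p|\le \beta^{-1}\delta^{1-\beta}|\nabla T_k^{\beta}(u_p)|$ on $\{\delta\le u_p<k\}$, the uniformity of the Young constants for $1<p<2$, and the admissibility of the test function --- so there is no gap, only redundancy.
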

\begin{proof}
	Let $\varphi\in C^1_c(\Omega)$ be a nonnegative function  with compact support in $\Omega$ such that $\varphi= 1$ in $\omega\subset\subset\Omega$; we take $(T_k(u_p)-k)\varphi^p$ as a test function in \eqref{pbp}, in order to get
	$$
	\begin{aligned}
	\int_\Omega |\nabla T_k(u_p)|^p\varphi^p\, dx + p\int_\Omega |\nabla T_k(u_p)|^{p-2} \nabla T_k(u_p)\cdot \nabla \varphi\ \varphi^{p-1} (T_k(u_p)-k)\, dx \le 0\,.
	\end{aligned}
	$$
	After  applying  twice  the Young inequality, we have
	  \begin{equation*}
	  \begin{aligned}
	\int_{\Omega} |\nabla T_k(u_p)|^{p}\varphi^p\, dx &\le  	(p-1)k\varepsilon \int_{\Omega} |\nabla T_k(u_p)|^{p}\varphi^{p}\, dx + k \varepsilon^{1-p}\int_{\Omega} |\nabla \varphi|^p\, dx
		 \\
	&
	\le  	k\varepsilon \int_{\Omega} |\nabla T_k(u_p)|^{p}\varphi^{p}\, dx + k\varepsilon^{1-p} \int_{\Omega} |\nabla \varphi|^2\, dx + k\varepsilon^{1-p}|\Omega|\,,
	\end{aligned}
	\end{equation*}
	and taking $\varepsilon$ sufficiently small  one easily concludes.
\end{proof}

\subsection{Estimates in BV and existence of an a.e. finite limit function $u$}

The estimates found in Lemmas \ref{lemmaphi},\ref{lemmapotenzaTk} and  \ref{lemmalocaleTk} translate into  uniform estimates in $BV$ with respect to $p$ that we collect in the following:

\begin{lemma}\label{stimeBV}
	Let $h$ satisfy \eqref{h1} and \eqref{h2} and let $f\in L^1(\Omega)$  nonnegative. Let $u_p$ be a solution of \eqref{pbp} then it holds that
	\begin{equation}\label{stimebv1}
	||\Gamma_p(u_p)||_{BV(\Omega)}\le C\,,
	\end{equation}
	\begin{equation}\label{stimebv2}
	||T_k^{\frac{\sigma-1+p}{p}}(u_p)||_{BV(\Omega)}\le C_k\,,
	\end{equation}
	and for any $\omega \subset \subset \Omega$
	\begin{equation}\label{stimebv3}
	||T_k(u_p)||_{BV(\omega)}\le C_\omega\,,
	\end{equation}	
	for some constants $C, C_k$ and $C_\omega$ which do not depend on $p$.
\end{lemma}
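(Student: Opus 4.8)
The plan is to recognise that \eqref{stimebv1}--\eqref{stimebv3} are nothing but the standard ``descent from $W^{1,p}$ to $BV$'' as $p\to 1^+$: each of Lemmas \ref{lemmaphi}, \ref{lemmapotenzaTk} and \ref{lemmalocaleTk} provides a bound on a (possibly $p$-dependent) function in a Sobolev space $W^{1,p}$, with a constant independent of $p\in(1,2)$, and a single use of Hölder's inequality on the finite-measure sets $\Omega$ or $\omega$ converts this into the desired $BV$ bound.

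Concretely, for \eqref{stimebv1} I would start from \eqref{stimap}, which gives $\displaystyle\int_\Omega|\nabla\Gamma_p(u_p)|^p\,dx\le C^p$, and apply Hölder's inequality with exponents $p$ and $p/(p-1)$ to obtain
$$\int_\Omega|\nabla\Gamma_p(u_p)|\,dx\le\Big(\int_\Omega|\nabla\Gamma_p(u_p)|^p\,dx\Big)^{1/p}|\Omega|^{1-\frac1p}\le C\,\max\{1,|\Omega|\}\,,$$
the last inequality being uniform in $p$ since $1-\frac1p\in(0,\tfrac12)$ for $1<p<2$. Since $\Gamma_p(u_p)\in W^{1,p}_0(\Omega)$ has zero trace and we have just shown its gradient lies in $L^1(\Omega)$, it belongs to $W^{1,1}_0(\Omega)\subset BV(\Omega)$ with $|D\Gamma_p(u_p)|(\Omega)=\int_\Omega|\nabla\Gamma_p(u_p)|\,dx$; then the Poincaré inequality in $W^{1,1}_0(\Omega)$ — whose constant is purely geometric, hence independent of $p$ — bounds $\|\Gamma_p(u_p)\|_{L^1(\Omega)}$ by the same quantity, yielding \eqref{stimebv1}. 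The estimate \eqref{stimebv2} is obtained word for word from \eqref{stimapTk} (with $C_k$ in place of $C$ and the trace of $T_k^{(\sigma-1+p)/p}(u_p)$ still zero), and \eqref{stimebv3} from \eqref{stimalocalep}: here Lemma \ref{lemmalocaleTk} controls the full norm $\|T_k(u_p)\|_{W^{1,p}(\omega)}$, so Hölder on $\omega$ bounds both $\|T_k(u_p)\|_{L^1(\omega)}$ and $\|\nabla T_k(u_p)\|_{L^1(\omega)}$ by $C_\omega\max\{1,|\omega|\}$, and since $T_k(u_p)\in W^{1,1}(\omega)$ one has $|DT_k(u_p)|(\omega)=\int_\omega|\nabla T_k(u_p)|\,dx$, giving \eqref{stimebv3} with no need for a Poincaré-type step.

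The only delicate point — and it is hardly an obstacle — is bookkeeping the uniformity of the constants as $p\to1^+$: one must check that the Hölder factor $|\Omega|^{1-1/p}$ (resp.\ $|\omega|^{1-1/p}$) and the quantity $C^p$ stay bounded, which holds because $1<p<2$ forces $1-\frac1p<\tfrac12$ and $C^p\le\max\{C,C^2\}$, and that the $L^1$-Poincaré constant of the bounded Lipschitz domain $\Omega$ does not depend on $p$. No compactness argument and no finer $BV$ machinery is required at this stage; those will only be needed afterwards, when extracting a limit function $u$ from the bounds \eqref{stimebv1}--\eqref{stimebv3} as $p\to1^+$.
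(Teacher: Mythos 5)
Your proposal is correct and follows essentially the same route as the paper: in both cases one converts the uniform $W^{1,p}$-gradient bounds of Lemmas \ref{lemmaphi}, \ref{lemmapotenzaTk} and \ref{lemmalocaleTk} into $L^1$-gradient bounds on the finite-measure sets $\Omega$ or $\omega$ (the paper via the Young inequality $|\nabla v|\le|\nabla v|^p+1$, you via H\"older, a purely cosmetic difference), and then uses the zero trace of $u_p$ to control the full $BV$ norm. The only minor divergence is that the paper invokes the equivalent $BV$ norm $\int_{\partial\Omega}|u|\,d\mathcal H^{N-1}+\int_\Omega|Du|$ with vanishing boundary term, whereas you use the $p$-independent Poincar\'e inequality in $W^{1,1}_0(\Omega)$; both are valid and equally elementary.
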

\begin{proof}
	The proofs of \eqref{stimebv1}, \eqref{stimebv2} and \eqref{stimebv3} are identical and rely on an application of the Young inequality. In fact, one has,
	$$\int_\Omega |\nabla \Gamma_p(u_p)|\, dx \le \int_\Omega |\nabla \Gamma_p(u_p)|^p\, dx + |\Omega| \overset{\eqref{stimap}}{\le} C +|\Omega|\,,$$
	which gives estimates \eqref{stimebv1} since the  trace of $u_p$ is zero and $\Gamma(0)=0$.
Similarly,
	$$\int_\Omega |\nabla T_k^\frac{\sigma-1+p}{p}(u_p)| \, dx\le \int_\Omega |\nabla T_k^\frac{\sigma-1+p}{p}(u_p)|^p \, dx+ |\Omega| \overset{\eqref{stimapTk}}{\le} C_k +|\Omega|\,,$$
	and, if $\omega\subset\subset\Omega$, one has
	$$\int_\omega |\nabla T_k(u_p)| \, dx\le \int_\omega |\nabla T_k(u_p)|^p\, dx + |\omega| \overset{\eqref{stimalocalep}}{\le} C_\omega +|\omega|\,.$$	
\end{proof}

We then deduce the following compactness results in which the most remarkable feature is given by the fact that the limit $u$ is finite a.e. on $\Omega$. As mentioned, this is due to the fact that $h$ vanishes at infinity and it is one of the striking differences with respect to the autonomous case considered in \cite{MST2}.

\begin{Corollary}\label{exu}
	Let $h$ satisfy \eqref{h1} and \eqref{h2} with $h(\infty)=0$ and let $f\in L^1(\Omega)$ be a  nonnegative function. Let $u_p$ be a solution of \eqref{pbp} then there exists a function $u$ almost everywhere finite such that, up to subsequences, $u_p$ converges to $u$ almost everywhere in $\Omega$ as $p\to 1^+$. Moreover, one has  that
	\begin{itemize}
		\item[i)] $\Gamma_p(u_p)$ converges to $\Phi(u)$ in $L^q(\Omega)$ with $1\le q<\frac{N}{N-1}$ and $\nabla \Gamma_p(u_p)$ converges *-weakly as measures to $D\Phi(u)$;			
		\item[ii)] $T_k^\frac{\sigma-1+p}{p}(u_p)$ converges to $T_k^\sigma(u)$ in $L^q(\Omega)$ with $1\le q<\frac{N}{N-1}$ and $\nabla T_k^\frac{\sigma-1+p}{p}(u_p)$ converges *-weakly as measures to $DT_k^\sigma(u)$ for any $k>0$;
		\item[iii)] $T_k(u_p)$ locally converges to $T_k(u)$ in $L^q(\Omega)$ with $1\le q<\frac{N}{N-1}$ and $\nabla T_k(u_p)$ converges locally *-weakly as measures to $DT_k(u)$ for any $k>0$.
	\end{itemize}
\end{Corollary}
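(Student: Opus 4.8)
The plan is to deduce Corollary \ref{exu} directly from the uniform bounds collected in Lemma \ref{stimeBV}, together with the compactness of the embedding $BV(\Omega)\hookrightarrow L^q(\Omega)$ for $1\le q<\frac{N}{N-1}$ and the special structure of the functions $\Phi$, $\Gamma_p$ and $T_k^{(\sigma-1+p)/p}$. First I would extract the a.e.\ limit of $u_p$: by \eqref{stimebv1} the sequence $\Gamma_p(u_p)$ is bounded in $BV(\Omega)$, hence (up to a subsequence) it converges strongly in $L^1(\Omega)$ and a.e.\ in $\Omega$ to some $w$. Since $\Gamma_p$ converges locally uniformly to $\Gamma_1=\Phi$ (because $\Phi'(t)^{1/p}\to\Phi'(t)$ uniformly on compact sets, $\Phi'$ being locally bounded and $\Phi$ locally Lipschitz) and $\Phi$ is a continuous strictly increasing function with $\Phi(0)=0$, one gets that $u_p=\Phi^{-1}\big(\Gamma_p(u_p)+o(1)\big)$ converges a.e.\ to $u:=\Phi^{-1}(w)$, which is finite a.e.\ exactly because $h(\infty)=0$ forces $1/\overline h(s)\to+\infty$, so $\Phi$ is a bijection of $[0,\infty)$ onto $[0,\infty)$ and $w<\infty$ a.e.\ transfers to $u<\infty$ a.e. This is the crucial point where the hypothesis $h(\infty)=0$ enters; without it $\Phi$ would be bounded and $u$ could be $+\infty$ on a set of positive measure (as in \cite{MST2}).

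Once $u_p\to u$ a.e.\ is fixed along this subsequence, item i) follows: $\Gamma_p(u_p)\to\Phi(u)$ a.e.\ (combining the a.e.\ convergence of $u_p$ with the locally uniform convergence $\Gamma_p\to\Phi$), and since $\{\Gamma_p(u_p)\}$ is bounded in $BV(\Omega)\subset L^{N/(N-1)}(\Omega)$, it is equi-integrable in every $L^q(\Omega)$ with $q<\frac{N}{N-1}$; Vitali's theorem then upgrades the a.e.\ convergence to strong $L^q$ convergence. The $*$-weak convergence of $\nabla\Gamma_p(u_p)$ to $D\Phi(u)$ as measures is immediate from the uniform bound \eqref{stimebv1} on the total variations (which gives, up to a further subsequence, $*$-weak convergence to some measure) and the identification of the limit measure via the strong $L^1$ convergence $\Gamma_p(u_p)\to\Phi(u)$, tested against $C^1_c(\Omega;\R^N)$ vector fields in the definition of distributional gradient; note $\Phi(u)\in BV(\Omega)$ as a consequence, by lower semicontinuity of the total variation.

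For item ii) the argument is the same, replacing $\Gamma_p(u_p)$ by $T_k^{(\sigma-1+p)/p}(u_p)$ and using \eqref{stimebv2}: since $s\mapsto s^{(\sigma-1+p)/p}$ converges uniformly on $[0,k]$ to $s\mapsto s^\sigma$ as $p\to1^+$, the a.e.\ convergence $u_p\to u$ yields $T_k^{(\sigma-1+p)/p}(u_p)\to T_k^\sigma(u)$ a.e., and equi-integrability (from the uniform $BV$, hence $L^{N/(N-1)}$, bound) plus Vitali give the strong $L^q$ convergence; the $*$-weak convergence of gradients to $DT_k^\sigma(u)$ and the membership $T_k^\sigma(u)\in BV(\Omega)$ follow as before. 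Item iii) is the localized version: on each $\omega\subset\subset\Omega$, \eqref{stimebv3} bounds $T_k(u_p)$ in $BV(\omega)$, the a.e.\ limit is $T_k(u)$ by continuity of $T_k$ and the already-established a.e.\ convergence of $u_p$, and Vitali on $\omega$ together with the $*$-weak compactness of the gradient measures on $\omega$ conclude; exhausting $\Omega$ by an increasing sequence of such $\omega$'s and a diagonal argument over $k\in\N$ (and the countably many $\omega$'s) gives the statement for every $k>0$ along a single subsequence. The main obstacle is really the first paragraph: making rigorous that strong $L^1$ convergence of $\Gamma_p(u_p)$ together with $\Gamma_p\to\Phi$ locally uniformly forces a.e.\ convergence of $u_p$ itself to a finite function — one has to be slightly careful that $\Gamma_p^{-1}$ also converges appropriately and that the null set where things fail is independent of $p$; everything after that is a routine combination of Vitali's theorem and weak-$*$ compactness of bounded sequences of measures.
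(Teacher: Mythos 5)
Your proposal is correct and follows essentially the same route as the paper: apply $BV$-compactness from Lemma \ref{stimeBV} to $\Gamma_p(u_p)$, define $u=\Phi^{-1}$ of the limit, deduce a.e.\ convergence of $u_p$ and a.e.\ finiteness of $u$ from the fact that $h(\infty)=0$ makes $\Phi$ blow up only at infinity, and then repeat the argument for $T_k^{(\sigma-1+p)/p}(u_p)$ and, locally, for $T_k(u_p)$. Your extra details (Vitali, identification of the weak-$*$ limits of the gradients) are fine; note only that the final diagonal argument over $k\in\N$ is unnecessary, since once $u_p\to u$ a.e.\ along the fixed subsequence the limits are uniquely identified, so the convergences in ii) and iii) hold for every $k>0$ along that same subsequence.
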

\begin{proof}
	By compactness in $BV$ it follows from Lemma \ref{stimeBV}  that, up to subsequences, $v_p:=\Gamma_p(u_p)$ converges to a function $v\in BV(\Omega)$ a.e. and  in $L^q(\Omega)$ for every $1\le q<\frac{N}{N-1}$ and $\nabla v_p$ converges *-weakly as measures to $Dv$. Observe that if we define  $u := \Phi^{-1}(v)$ then  $u_p= \Gamma^{-1}_p(v_p)$  converges almost everywhere as $p \to 1^+$ to $u$. Finally, as $v = \Phi(u) \in L^q(\Omega)$ for all $1\le q<\frac{N}{N-1}$, one has  that $u$ is almost everywhere finite since $v$ is almost everywhere finite and $\Phi(s)$ blows up only as  $s\to \infty$. This shows that $u_p$ converges almost everywhere to $u$ as $p\to 1^+$ and it also proves i). The proofs of ii) and iii) follow in a similar  way. 	
\end{proof}

\subsection{Proof of the main existence result}

We will divide the proof in different lemmas, and then the proof of the existence will follow as the collection of these lemmas. First of all, let us underline that, from here on (even when not specified), $u_p$ will be a solution to \eqref{pbp} and $u$ is the almost everywhere limit as $p\to 1^+$ given by Corollary \ref{exu}.
  The first result concerns the existence of the vector field $z$ playing the role of the singular quotient $|Du|^{-1}Du$. In particular, the following lemma is the only place where the positivity of $f$ in $\Omega$ will be employed.
	\begin{lemma}\label{lemma_campoz}
	Let $h$ satisfy \eqref{h1}  and \eqref{h2} provided  $h(\infty)=0$,  and let $0< f\in L^1(\Omega)$. Let $u_p$ be a solution of \eqref{pbp} then there exists $z\in \mathcal{D}\mathcal{M}^\infty(\Omega)$ with $||z||_{L^\infty(\Omega)^N}\le 1$ such that $h(u)f \in L^1(\Omega)$ and such that
	\begin{equation}\label{lemma_distr}
	-\operatorname{div}z = h(u)f \text{  as measures in } \Omega\,.
	\end{equation}
	Moreover,
	\begin{equation}\label{lemma_identificarez}
	(z,DT_k(u))=|DT_k(u)| \ \ \ \ \text{as measures in } \Omega \text{ for any }k>0\,.
	\end{equation}		 	 	 		 	 	 	
\end{lemma}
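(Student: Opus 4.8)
The plan is to extract $z$ as a weak-$*$ limit in $L^\infty(\Omega)^N$ of the approximating fields $z_p:=|\nabla u_p|^{p-2}\nabla u_p$. First I would note that, by Lemma \ref{lemmaphi}, $|z_p|=|\nabla u_p|^{p-1}$ and $\int_\Omega |\nabla u_p|^{p-1}\Phi'(u_p)^{\frac1{p'}}$-type quantities are controlled; more precisely, H\"older's inequality applied with exponents $p$ and $p'$ together with \eqref{stimap} gives, for every open $\omega\subset\subset\Omega$ and every $1<q<\infty$, a bound $\|z_p\|_{L^q(\omega)^N}\le C_{\omega,q}$ uniform in $p$ close to $1$ (on the set where $u_p$ stays away from $0$ and $\infty$, $\Phi'(u_p)$ is bounded below; on the other regions one uses \eqref{stimapTk} and \eqref{stimalocalep} instead). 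Hence, by a diagonal argument over an exhaustion $\omega_n\uparrow\Omega$, up to a subsequence $z_p\rightharpoonup z$ weakly in $L^q_{\rm loc}(\Omega)^N$ for all $q<\infty$, and the uniform bound $|z_p|\le$ (something tending to $1$) forces $\|z\|_{L^\infty(\Omega)^N}\le 1$. Passing to the limit in the distributional formulation $-\operatorname{div}z_p=h_p(u_p)f_p$ against $\varphi\in C_c^1(\Omega)$, using that $h_p(u_p)f_p\to h(u)f$ (here Fatou plus the positivity argument is needed: since $f>0$ and $u_p\to u$ a.e., $h_p(u_p)f_p\to h(u)f$ a.e., and a uniform $L^1$ bound from testing \eqref{pbp} with $1$ — wait, that is not admissible; instead one tests with a fixed cutoff and uses the a priori estimates — gives equi-integrability enough to pass to the limit), one obtains \eqref{lemma_distr} as measures, and in particular $\operatorname{div}z\in\mathcal M(\Omega)$, so $z\in\mathcal{DM}^\infty(\Omega)$, and $h(u)f\in L^1(\Omega)$ by lower semicontinuity / Fatou.

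For \eqref{lemma_identificarez}, the standard $1$-Laplacian identification via lower semicontinuity and Young's inequality for the pairing $(z,DT_k(u))$ should be carried out. The idea: for fixed $k>0$ and a nonnegative $\varphi\in C_c^1(\Omega)$, test \eqref{pbp} with $T_k(u_p)\varphi$ (or rather with $(T_k(u_p)-$const$)\varphi$ adjusted so as to be admissible) to get an identity relating $\int_\Omega |\nabla T_k(u_p)|^p\varphi$ to terms involving $z_p\cdot\nabla\varphi\,T_k(u_p)$ and $h_p(u_p)f_p T_k(u_p)\varphi$. On one hand $\int_\Omega |\nabla T_k(u_p)|\varphi\le \int_\Omega|\nabla T_k(u_p)|^p\varphi/p+\ldots$ combined with lower semicontinuity gives $\int_\Omega\varphi|DT_k(u)|\le\liminf$; on the other hand the right-hand side converges (using $z_p\rightharpoonup z$ in $L^q_{\rm loc}$, $T_k(u_p)\to T_k(u)$ strongly in $L^{q'}_{\rm loc}$, and the Green/pairing formula \eqref{green}) to $\int_\Omega (z,DT_k(u))\varphi + \int_{\partial\Omega}\ldots$, and since $|(z,DT_k(u))|\le |DT_k(u)|$ as measures by \eqref{finitetotal1} with $\|z\|_\infty\le 1$, the two inequalities pinch to equality, yielding \eqref{lemma_identificarez}. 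One must be slightly careful that $T_k(u)\in BV_{\rm loc}$ only, so \eqref{green} is applied on compactly contained open sets or via the local theory of \cite{dgs}.

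The main obstacle I anticipate is the passage to the limit in the singular term $h_p(u_p)f_p\to h(u)f$ in $L^1_{\rm loc}$ (or at least in the sense of distributions) when $h(0)=+\infty$: on the set $\{u=0\}$ the limit $h(u)f$ could a priori be large, and one needs the positivity $f>0$ precisely to rule out mass concentrating there. Concretely, the delicate point is showing $\int_{\{u_p\le\delta\}} h_p(u_p)f_p\to 0$ as $\delta\to0$ uniformly in $p$; this is where the truncated auxiliary function $V_\delta$ from \eqref{Vdelta} enters — testing \eqref{pbp} with something like $V_\delta(u_p)\Phi(u_p)$ or $V_\delta(u_p)$ times a cutoff, and exploiting that $-\Delta_p u_p\ge 0$ together with $f>0$, to control the contribution of the small-$u_p$ region. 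Once that uniform smallness is in hand, Vitali's theorem upgrades the a.e. convergence $h_p(u_p)f_p\to h(u)f$ to $L^1_{\rm loc}$ convergence, and everything else is routine lower-semicontinuity and Young-inequality bookkeeping.
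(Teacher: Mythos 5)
Your overall strategy for \eqref{lemma_distr} (Fatou for one inequality, the $V_\delta$--test exploiting $f>0$ and $\{u=0\}\subset\{f=0\}$ for the small--$u_p$ region, then the reverse inequality) is the same as the paper's, and you correctly identified the delicate point there. However, the very first step of your plan has a genuine gap: you extract $z$ as a weak limit of the \emph{full} fields $z_p=|\nabla u_p|^{p-2}\nabla u_p$, claiming uniform local $L^q$ bounds from \eqref{stimap}, \eqref{stimapTk}, \eqref{stimalocalep}. None of these estimates controls $\nabla u_p$ on the set where $u_p$ is large: \eqref{stimapTk} and \eqref{stimalocalep} only bound gradients of truncations, and in \eqref{stimap} the weight $\Phi'(u_p)$ may degenerate for large $u_p$ (nothing forces $-\overline h'$ to stay away from zero), so $\|z_p\|_{L^q(\omega)}$ is simply not bounded uniformly in $p$; recall that the limit $u$ need not even be in $BV(\Omega)$ (Example \ref{esempio1}). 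The paper circumvents this by taking, for each level $k$ outside the countable set $\mathcal N=\{k:|\{u=k\}|>0\}$, weak $L^q_{\rm loc}$ limits $z_k$ of $|\nabla T_k(u_p)|^{p-2}\nabla T_k(u_p)$, proving the compatibility relation $z_h\chi_{\{u<k\}}=z_k$ for $k<h$, and gluing the $z_k$ into a single field $z$ on $\Omega$ using that $u$ is a.e.\ finite; the bound $\|z\|_{L^\infty(\Omega)^N}\le1$ then comes from $\limsup_p\|\cdot\|_{L^q(\omega)}\le|\omega|^{1/q}$ and $q\to\infty$, not from any pointwise bound on $|z_p|$ (which is false for fixed $p$). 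Without this truncate-and-glue construction (or a substitute), your vector field $z$ is not defined.

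A second, smaller gap concerns \eqref{lemma_identificarez} when $\gamma>1$. Testing with $T_k(u_p)\varphi$ produces the term $\int_\Omega h_p(u_p)f_pT_k(u_p)\varphi\,dx$, and on $\{u_p\le s_1\}$ the integrand behaves like $u_p^{1-\gamma}f$, which admits no $p$-independent integrable majorant; Fatou gives the inequality in the wrong direction for the pinching argument. This is precisely why the paper tests with $T_k^\sigma(u_p)S_n(u_p)\varphi$ (so that $h(s)T_k^\sigma(s)$ stays bounded near $s=0$), first derives $(z,DT_k^\sigma(u))=|DT_k^\sigma(u)|$ via the mollified identity \eqref{eqsigma2}, and only then transfers the identity to $T_k(u)$ through Proposition 4.5 of \cite{CDC} on Radon--Nikod\'ym derivatives. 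Your sketch works essentially as stated for $\gamma\le1$, but for $\gamma>1$ you need the $\sigma$-power device and this last transfer step, neither of which appears in your plan.
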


\begin{proof}We begin by considering the set $\mathcal N$ of all levels $k>0$ such that $|\{u=k\}|>0$, which is a countable set. We point out that if $k\notin \mathcal N$, then
\begin{equation*}
  \chi_{\{u_p<k\}} \to \chi_{\{u<k\}}\qquad\hbox{strongly in } L^r(\Omega) \hbox{ for all }1\le r<\infty\,,
\end{equation*}
owing to the pointwise convergence $u_p\to u$ a.e. in $\Omega$.

For a fixed  $\omega\subset\subset\Omega$ one  obviously  has
\begin{equation}\label{conv2}
  \chi_{\omega\cap\{u_p<k\}} \to \chi_{\omega\cap\{u<k\}}\qquad\hbox{strongly in } L^r(\omega) \hbox{ for all }1\le r<\infty\,.
\end{equation}
	On the other hand, applying  \eqref{stimalocalep} and the H\"older inequality we deduce that for $1\le q<\frac{p}{p-1}$ and for any $k>0$,
	\begin{align}\label{stimaz}
	|||\nabla T_k(u_p)|^{p-2}\nabla T_k(u_p)||_{L^q(\omega)^N} \le \left(\int_\omega |\nabla T_k(u_p)|^p\, dx\right)^{\frac{p-1}{p}}|\omega|^{\frac{1}{q}-\frac{p-1}{p}}\leq C_\omega^{\frac{p-1}{p}}|\omega|^{\frac{1}{q}-\frac {p-1}{p}}\,,		
	\end{align}
that is, the family $\left(\big(|\nabla T_k(u_p)|^{p-2}\nabla T_k(u_p)\big)\chi_\omega\right)_p$ is bounded in $L^q(\omega)^N$ for every $1\le q<\infty$.
	This implies the existence of a vector field $z_{q,k}\in L^q(\omega)^N$ such that, up to subsequences,
$$\big(|\nabla T_k(u_p)|^{p-2}\nabla T_k(u_p)\big)\chi_\omega\rightharpoonup z_{q,k}\qquad\hbox{weakly  in }L^q(\omega)^N\,.$$
Now a standard  diagonal argument takes to the existence of a unique vector field $z_k$, defined independently of $q$, such that
\begin{equation}\label{conv1}
\left(|\nabla T_k(u_p)|^{p-2}\nabla T_k(u_p)\right)\chi_\omega \rightharpoonup z_k\quad\hbox{weakly  in }L^q(\omega)^N\hbox{ for any }1\le q<\infty\,.
\end{equation}
 Moreover, letting $p\to 1^+$, by weakly lower semicontinuity applied to \eqref{stimaz}, one yields to $||z_k||_{L^q(\omega)^N}\le |\omega|^{\frac1q}$ for any $q<\infty$ and letting $q\to \infty$ we also have $||z_k||_{L^\infty(\omega)^N}\le 1$.

The vector fields we have considered are defined in $\omega$: in order to explicit this dependence, we will write $z_k^\omega$ for $k\notin\mathcal N$. It remains to extend them to the whole $\Omega$. To this end, just take into account \eqref{conv1} to see that $\omega_1\subset\omega_2\subset\subset\Omega$ implies $z_k^{\omega_2}|_{\omega_1}=z_k^{\omega_1}$. Thus, it is enough to take an increasing sequence $\omega_n\subset\subset\Omega$ satisfying $\cup_{n=1}^\infty\omega_n=\Omega$ and so we obtain vector fields $z_k$ ($k\notin\mathcal N$) such that $z_k|_{\omega}=z_k^\omega$ and \eqref{conv1} holds for each $\omega\subset\subset\Omega$. It also follows from $||z_k^{\omega_n}||_{L^\infty(\omega_n)^N}\le 1$ for all $n\in\N$ that $||z_k||_{L^\infty(\Omega)^N}\le 1$.

Now, choose $h,k\notin \mathcal N$ satisfying $0<k<h$. Having in mind \eqref{conv1} and \eqref{conv2}, and letting $p$ go to $1$ in the identity
\[\left(|\nabla T_h(u_p)|^{p-2}\nabla T_h(u_p)\right)\chi_{\omega\cap\{u_p<k\}}=\left(|\nabla T_k(u_p)|^{p-2}\nabla T_k(u_p)\right)\chi_\omega\,,\]
we obtain
\begin{equation}\label{zk}z_h\chi_{\{u<k\}}=z_k\,.\end{equation}

As \eqref{zk} is in force, one can define  the vector field $z$ in $\Omega$ as
\[z(x)=z_k(x)\qquad \hbox{when }u(x)<k\quad (k\notin\mathcal N)\,.\]
It is straightforward that this vector field $z$ is well--defined in $\Omega$ (since $u$ is almost everywhere finite), $z\in L^\infty(\Omega)^N$ and $||z||_{L^\infty(\Omega)^N}\le 1$. Furthermore,
\[z\chi_{\{u<k\}}=z_k\qquad\hbox{ whatever }k\notin\mathcal N\,.\]

\medskip

	Now we look at \eqref{lemma_distr}; let us denote
	\begin{equation*}
		S_n(s) = \begin{cases}
		1 &\text{ if } s\le n\,, \\
		-s + n + 1 &\text{ if } n<s<n+1\,,\\
		0 &\text{ if } s\ge  n+1\,,
		\end{cases}
	\end{equation*}
and  let $0\le \varphi \in C^1_c(\Omega)$;  we take $S_n(u_p)\varphi$ as a test function in \eqref{pbp}, yielding to
    \begin{equation}\label{limite1}
    	\int_\Omega |\nabla u_p|^{p-2}\nabla u_p \cdot \nabla\varphi S_n(u_p)\, dx - \int_{\{n<u_p<n+1\}} |\nabla u_p|^{p}\varphi\, dx = \int_\Omega h_p(u_p)f_p S_n(u_p)\varphi\, dx\,,
    \end{equation}
    whence getting rid of the second term on the left hand side of \eqref{limite1} and taking $p\to1^+$ one has
 	\begin{equation*}
	\int_\Omega z \cdot \nabla\varphi S_n(u)\, dx \ge \int_\Omega h(u)f S_n(u)\varphi\, dx\,,
	\end{equation*}
	where, on the right hand side,  we have applied the Fatou Lemma. Again thanks  to Fatou's Lemma we    take $n\to \infty$,  getting
 	\begin{equation*}
	\int_\Omega z \cdot \nabla\varphi\, dx \ge \int_\Omega h(u)f \varphi\, dx\,,
	\end{equation*}	
	from which one deduces that   $z\in \DM_{\rm loc}(\Omega)$ and   $h(u)f \in L^1_{\rm loc}(\Omega)$. Let also observe that, in case $h(0)=\infty$, having $h(u)f\in L^1_{\rm loc}(\Omega)$ implies that
	\begin{equation}\label{uzero}
	\{u=0\}\subset\{f=0\},
	\end{equation}
	up to a set of zero Lebesgue measure; in particular, as $f>0$ one has that  $u>0$.
In order to prove that \eqref{lemma_distr} holds we now consider   $1-S_n(u_p)$ as a test function in \eqref{pbp} that yields
	$$\int_{\{n<u_p<n+1\}} |\nabla u_p|^p \, dx= \int_\Omega h_p(u_p)f_p (1-S_n(u_p))\, dx\,,$$
	and taking the limit as $p\to 1^+$ by means of the dominated convergence Theorem  (observe that $u_p\ge n$), one gets
	$$\displaystyle \lim_{p\to 1^+} \int_{\{n<u_p<n+1\}} |\nabla u_p|^p\, dx = \int_\Omega h(u)f(1-S_n(u))\, dx\,,$$
	and as $n\to \infty$, we have
	\begin{equation}\label{condrin}
	\displaystyle \lim_{n\to \infty}\lim_{p\to 1^+} \int_{\{n<u_p<n+1\}} |\nabla u_p|^p\, dx = 0\,,
	\end{equation}
	where in the last step we used again dominated convergence since $$h(u)f\chi_{\{n<u<n+1\}}\leq \sup_{s\geq 1} h(s)f \in L^1(\Omega)\,. $$
	Now we want to pass to the limit every terms of  \eqref{limite1} first in $p$ and then in $n$. We easily  pass to the limit the first term as well as the second term which goes to zero thanks to \eqref{condrin}.

	Concerning the  term on the right hand side   the limit is easy  if $h(0)<\infty$. Hence assume $h(0)=\infty$.
	 Let $\delta>0$ such that $\delta \notin\mathcal N$,  and write
	\begin{equation}\label{limite2}
		\int_\Omega h_p(u_p)f_pS_n(u_p)\varphi\, dx= \int_{\{u_p< \delta\}}h_p(u_p)f_pS_n(u_p)\varphi\, dx + \int_{\{u_p \ge\delta\}} h_p(u_p)f_pS_n(u_p)\varphi\, dx\,;
	\end{equation}
	 we want to pass to the limit in the following  order:  $p$,  then $n$,  and finally $\delta$ ($\to 0^+$). We first pass to the limit the second term on the right hand side of \eqref{limite2}; we observe that
	$$h_p(u_p)f_pS_n(u_p)\varphi\chi_{\{u_p>\delta\}}\le \sup_{s\in[\delta,\infty)}h(s) \ f\varphi \in L^1(\Omega)\,,$$
	which allows us to apply  Lebesgue's Theorem as $p\to 1^+$. One gains that
	$$ \displaystyle \lim_{p\to 1^+}\int_{\{u_p \ge \delta\}}h_p(u_p)f_pS_n(u_p)\varphi\, dx = \int_{\{u \ge \delta\}}h(u)fS_n(u)\varphi\, dx\,,$$
	and observing that $h(u)f\in L^1_{\rm loc}(\Omega)$, then one can apply once again the Lebesgue Theorem first in $n$ and then in $\delta$. Hence,  one has
	\begin{equation} \label{limite3}
	\displaystyle \lim_{\delta\to 0^+} \lim_{n\to \infty} \lim_{p\to 1^+}\int_{\{u_p \ge \delta\}}h_p(u_p)f_pS_n(u_p)\varphi\, dx = \int_	{\{u > 0\}}h(u)f\varphi\, dx\,.
	\end{equation}
Therefore, we are just left to show that the first term on the right hand side of \eqref{limite2} vanishes. To this aim we take $V_\delta(u_p)\varphi$ ($0\le \varphi \in C^1_c(\Omega)$ and $V_\delta(s)$ is defined in \eqref{Vdelta}) as a test function in \eqref{pbp} yielding to
	\begin{multline*}
		\int_{\{u_p< \delta\}} h_p(u_p)f_p\varphi\, dx \le \int_{\Omega} h_p(u_p)f_pV_\delta(u_p)\varphi\, dx \\
= \int_{\Omega} |\nabla u_p|^{p-2}\nabla u_p \cdot \nabla \varphi V_\delta(u_p) \, dx- \frac{1}{\delta}\int_{\{\delta<u_p<2\delta\}} |\nabla u_p|^p\varphi\, dx\,,
	\end{multline*}
	which gives
	\begin{equation}\label{limite4}
	\int_{\{u_p< \delta\}} h_p(u_p)f_p\varphi\, dx \le \int_{\Omega} |\nabla u_p|^{p-2}\nabla u_p \cdot \nabla \varphi V_\delta(u_p)\, dx\,.
	\end{equation}	
	Hence we can take $p\to 1^+$ in \eqref{limite4} obtaining
	\begin{equation*}
	\limsup_{p\to 1^+} \int_{\{u_p< \delta\}} h_p(u_p)f_p\varphi\, dx \le \int_{\Omega} z \cdot \nabla \varphi V_\delta(u)\, dx\,;
	\end{equation*}	
as there is no dependence on $n$,  we let $\delta \to 0^+$ obtaining
	\begin{equation}\label{limite5}
	\lim_{\delta\to 0^+}\limsup_{p\to 1^+} \int_{\{u_p< \delta\}} h_p(u_p)f_p\varphi\, dx \le \int_{\{u=0\}} z \cdot \nabla \varphi\, dx \overset{\eqref{uzero}}{=} 0\,.
	\end{equation}
	Summing up  (recall that $S_n(s)\le 1$)  \eqref{limite5} implies that
	\begin{equation*}\label{limite6}
		\lim_{\delta \to 0^+}\lim_{n\to \infty}\lim_{p\to 1^+}\int_{\{u_p< \delta\}}h_p(u_p)f_pS_n(u_p)\varphi\, dx =0\,,
	\end{equation*}
	which jointly with \eqref{limite3} gives
	\begin{equation*}\label{lebesgue}
	\lim_{n\to \infty}\lim_{p\to 1^+} \int_\Omega h_p(u_p)f_pS_n(u_p)\varphi\, dx= \int_{\{u > 0\}}h(u)f\varphi \, dx\overset{\eqref{uzero}}{=} \int_{\Omega}h(u)f\varphi\, dx.
	\end{equation*}	
	This proves that \eqref{lemma_distr} holds. Let us also underline that the validity of \eqref{lemma_distr} allows us to apply Lemma $5.3$ of \cite{dgop} in order to deduce that $\operatorname{div}z \in L^1(\Omega)$, i.e.  both $z\in \DM(\Omega)$ and $h(u)f \in L^1(\Omega)$.

In order to prove \eqref{lemma_identificarez} we first need to show that a slightly modified version of \eqref{lemma_distr} holds. For $k>0$
	we consider $(\rho_\epsilon\ast T_k^\sigma(u))\varphi$  as a test function in \eqref{lemma_distr} where $\rho_\epsilon$ is a sequence of standard mollifiers and $\varphi\in C^1_c(\Omega)$ is nonnegative. Hence
	\begin{equation*}\label{eqsigma}
	-\int_\Omega (\rho_\epsilon\ast T_k^\sigma(u)) \varphi\, \operatorname{div}z =  \int_\Omega h(u)f(\rho_\epsilon\ast T_k^\sigma(u))\varphi\, dx\,.
	\end{equation*}
	Observe that $T_k^\sigma(u)\in BV(\Omega)$, that $(\rho_\epsilon\ast T_k^\sigma(u)) \to T_k^\sigma(u)^*$ $\mathcal{H}^{N-1}-$almost everywhere as $\epsilon \to 0^+$,  and that $\rho_\epsilon\ast T_k^\sigma(u)\le k^\sigma$. Then letting $\epsilon \to 0^+$ one has
	\begin{equation}\label{eqsigma2}
- T_k^\sigma(u)^* \operatorname{div}z =   h(u)fT_k^\sigma(u) \ \ \text{as measures in }\Omega\,.
\end{equation}
	Now we take $T_k^\sigma(u_p)S_n(u_p)\varphi$ ($0\leq \varphi\in C^1_c(\Omega)$ and $n>k$) as a test function in \eqref{pbp}, deducing, after an application of the Young inequality, that
	\begin{equation}\label{inec:limit}
	\begin{aligned}	
	&\int_{\Omega} |\nabla T_k^{\frac{\sigma-1+p}{p}}(u_p)|\varphi\, dx + \int_{\Omega} T_k^\sigma(u_p)S_n(u_p)|\nabla u_p|^{p-2}\nabla u_p \cdot \nabla \varphi\, dx - k^\sigma\int_{\{n<u_p<n+1\}} |\nabla u_p|^{p}\varphi\, dx
	\\
	&\le \int_{\Omega}  h_p(u_p)f_p T_k^\sigma(u_p)S_n(u_p)\varphi\, dx + \frac{p-1}{p}\left(\frac{p}{\sigma-1+p}\right)^{\frac{p^2}{p-1}}\int_\Omega \varphi\, dx\,,	
	\end{aligned}
	\end{equation}
	and we want to  let  $p\to 1^+$ first, and then $n\to \infty$ in the \eqref{inec:limit}. We observe that by \eqref{condrin} the third term on the left hand side of \eqref{inec:limit} goes to zero as $p\to 1^+$ and $n\to\infty$. Then by lower semicontinuity and weak convergence at the left hand side of \eqref{inec:limit}, yielding to
	\begin{equation*}
	\begin{aligned}	
	&\int_{\Omega} \varphi|D T_k^\sigma(u)| + \int_{\Omega} T_k^\sigma(u)S_n(u)z \cdot \nabla \varphi\, dx - \displaystyle \lim_{p\to 1^+}k^\sigma\int_{\{n<u_p<n+1\}} |\nabla u_p|^{p}\varphi\, dx
	\\
	&\le \int_{\Omega}  h(u)f T_k^\sigma(u)S_n(u)\varphi\, dx\,,	
	\end{aligned}
	\end{equation*}	
	where   in order to   pass to the limit on the first term of right hand side we used dominated convergence Theorem  as
	$$
	h_p(u_p)f_p T_k^\sigma(u_p)S_n(u_p)\varphi\leq cf\varphi\chi_{\{u_p\leq s_1\}}+ \max_{s\in[s_1,\infty)}h(s)k^\sigma f \varphi\chi_{\{u_p> s_1\}}\leq (c+\max_{s\in[s_1,\infty)}h(s)k^\sigma)f\varphi\,.
	$$

Finally one can  take $n\to \infty$, obtaining
	\begin{equation*}
\begin{aligned}	
&\int_{\Omega} \varphi|D T_k^\sigma(u)| + \int_{\Omega} T_k^\sigma(u)z \cdot \nabla \varphi\, dx \le \int_{\Omega}  h(u)f T_k^\sigma(u)\varphi \, dx \overset{\eqref{eqsigma2}}{=} -\int_\Omega T_k^\sigma(u)^*\varphi \operatorname{div}z\,.
\end{aligned}
\end{equation*}		
Recalling \eqref{dist1} the previous implies that
	\begin{equation*}\label{primoverso}
	\int_{\Omega} \varphi|D T_k^\sigma(u)| \le \int_{\Omega}\varphi (z, D T_k^\sigma(u)),  \ \ \ \forall \varphi\in C^1_c(\Omega), \ \ \varphi \ge 0\,,	
	\end{equation*}	
	and since the reverse inequality is trivial, one has that
	\begin{equation}\label{ztk}
		(z,D T_k^\sigma(u)) =  |DT_k^\sigma(u)| \ \ \text{as measures in }\Omega\,,
	\end{equation}
	for any $k>0$. Finally one can apply Proposition $4.5$ of \cite{CDC} which implies
	\begin{equation}\label{cdc}
	\theta(z,DT_k(u),x)=\theta(z,DT_k^\sigma(u),x)  \qquad \text{for }|DT_k(u)|\text{-a.e. }\ x\in\Omega\,,
	\end{equation}
	where \(\theta(z, DT_k(u), \cdot)\) is the Radon-Nikod\'ym derivative of the measure $(z, DT_k(u))$ with respect to $|DT_k(u)|$, while \(\theta(z, DT_k^\sigma(u), \cdot)\) denotes the Radon-Nikod\'ym derivative of $(z, DT_k^\sigma(u))$ with respect to $|DT_k^\sigma(u)|$. This, jointly with \eqref{ztk}, implies \eqref{lemma_identificarez}. This concludes the proof of the lemma.
\end{proof}

Now we show that the Dirichlet boundary condition holds.

\begin{lemma}\label{lemmabordo}
Let $h$ satisfy \eqref{h1} and \eqref{h2} with $h(\infty)=0$  and let $0<f\in L^1(\Omega)$. Let $u$ be the function found in Lemma \ref{exu} and let $z$ be the vector field of  Lemma \ref{lemma_campoz},  then it holds 	 	 	
	\begin{align}\label{bordo}
	& \lim_{\epsilon\to 0^+} \fint_{\Omega\cap B(x,\epsilon)} T_k(u (y)) dy = 0 \ \ \ \text{or} \ \ \ [z,\nu] (x)= -1 \ \ \ \text{for  $\mathcal{H}^{N-1}$-a.e. } x \in \partial\Omega\,,
	\end{align}
	for any $k>0$.
\end{lemma}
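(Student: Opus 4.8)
The plan is to obtain \eqref{bordo} by squeezing the boundary integral $\int_{\partial\Omega}T_k^\sigma(u)\big|_{\partial\Omega}\,(1+[z,\nu])\,d\mathcal H^{N-1}$ between a Green--formula identity and a lower--semicontinuity inequality, both for $v:=T_k^\sigma(u)$. This is legitimate: indeed $T_k^\sigma(u)\in BV(\Omega)\cap L^\infty(\Omega)$ (from \eqref{stimebv2} and Corollary \ref{exu}, cf. Proposition \ref{propreg}), while $z\in\DM(\Omega)$ with $\operatorname{div}z=-h(u)f\in L^1(\Omega)$ as obtained at the end of Lemma \ref{lemma_campoz}, so that $v^*\in L^1(\Omega,\operatorname{div}z)$. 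Applying the Green formula \eqref{green} to $vz$, identifying $[T_k^\sigma(u)z,\nu]=T_k^\sigma(u)\big|_{\partial\Omega}[z,\nu]$ by \eqref{des2}, and using \eqref{eqsigma2} (so that $\int_\Omega T_k^\sigma(u)^*\operatorname{div}z=-\int_\Omega h(u)fT_k^\sigma(u)\,dx$) together with \eqref{ztk} (so that $\int_\Omega(z,DT_k^\sigma(u))=\int_\Omega|DT_k^\sigma(u)|$), I would arrive at
\begin{equation*}
\int_\Omega|DT_k^\sigma(u)|-\int_\Omega h(u)fT_k^\sigma(u)\,dx=\int_{\partial\Omega}T_k^\sigma(u)\big|_{\partial\Omega}\,[z,\nu]\,d\mathcal H^{N-1}\,.
\end{equation*}

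For the complementary inequality I would invoke the $L^1$--lower semicontinuity of $w\mapsto\int_\Omega|Dw|+\int_{\partial\Omega}|w|\,d\mathcal H^{N-1}$ applied to $T_k^{\frac{\sigma-1+p}{p}}(u_p)$, which converges to $T_k^\sigma(u)$ in $L^1(\Omega)$ by Corollary \ref{exu} and has zero trace since $u_p\in W^{1,p}_0(\Omega)$, and then bound the resulting $\liminf$ by combining H\"older's inequality with the energy identity produced by testing \eqref{pbp} with $T_k^\sigma(u_p)$ (as in Lemma \ref{lemmapotenzaTk}):
\begin{equation*}
\int_\Omega|DT_k^\sigma(u)|+\int_{\partial\Omega}T_k^\sigma(u)\big|_{\partial\Omega}\,d\mathcal H^{N-1}\le\liminf_{p\to1^+}|\Omega|^{\frac{p-1}{p}}\left(\frac{1}{\sigma}\left(\frac{\sigma-1+p}{p}\right)^p\int_\Omega h_p(u_p)f_pT_k^\sigma(u_p)\,dx\right)^{\frac1p}\,.
\end{equation*}
Since $|\Omega|^{\frac{p-1}{p}}\to1$, $\frac1\sigma\big(\frac{\sigma-1+p}{p}\big)^p\to1$, and $h_p(u_p)f_pT_k^\sigma(u_p)\to h(u)fT_k^\sigma(u)$ in $L^1(\Omega)$ by dominated convergence (the integrand being dominated by a fixed multiple of $f$, with \eqref{uzero} used on $\{u=0\}$ when $h(0)=\infty$), the right--hand side converges to $\int_\Omega h(u)fT_k^\sigma(u)\,dx$, so that $\int_\Omega|DT_k^\sigma(u)|+\int_{\partial\Omega}T_k^\sigma(u)\big|_{\partial\Omega}\,d\mathcal H^{N-1}\le\int_\Omega h(u)fT_k^\sigma(u)\,dx$.

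Substituting the Green identity into this inequality eliminates $\int_\Omega|DT_k^\sigma(u)|$ and $\int_\Omega h(u)fT_k^\sigma(u)\,dx$, leaving $\int_{\partial\Omega}T_k^\sigma(u)\big|_{\partial\Omega}\,(1+[z,\nu])\,d\mathcal H^{N-1}\le0$. As $u\ge0$ forces $T_k^\sigma(u)\big|_{\partial\Omega}\ge0$ and $||z||_{L^\infty(\Omega)^N}\le1$ forces $1+[z,\nu]\ge0$ $\mathcal H^{N-1}$--a.e. on $\partial\Omega$, the nonnegative integrand must vanish $\mathcal H^{N-1}$--a.e.; hence at $\mathcal H^{N-1}$--a.e. $x\in\partial\Omega$ either $[z,\nu](x)=-1$ or the $BV$--trace of $T_k^\sigma(u)$ at $x$ equals $0$, and in the latter case the definition of the trace together with the concavity of $t\mapsto t^{1/\sigma}$ (Jensen's inequality, using $\sigma\ge1$) give $\fint_{\Omega\cap B(x,\epsilon)}T_k(u(y))\,dy\le\big(\fint_{\Omega\cap B(x,\epsilon)}T_k^\sigma(u(y))\,dy\big)^{1/\sigma}\to0$ as $\epsilon\to0^+$, which is precisely \eqref{bordo}.

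The main obstacle I anticipate is ensuring that the Green formula \eqref{green} really applies in the first step, which hinges on $\operatorname{div}z\in L^1(\Omega)$, i.e. $h(u)f\in L^1(\Omega)$ — the final conclusion of Lemma \ref{lemma_campoz}, obtained there via Lemma $5.3$ of \cite{dgop} — and, secondarily, the careful passage to the limit in the H\"older estimate, where the vanishing factor $|\Omega|^{(p-1)/p}$ and the limit of $\frac1\sigma\big(\frac{\sigma-1+p}{p}\big)^p$ must be tracked alongside the dominated--convergence argument on the set where $u$ vanishes.
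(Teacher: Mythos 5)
Your proposal is correct and follows essentially the same route as the paper: test \eqref{pbp} with $T_k^\sigma(u_p)$, pass to the limit via lower semicontinuity of $w\mapsto\int_\Omega|Dw|+\int_{\partial\Omega}|w|\,d\mathcal H^{N-1}$ together with dominated convergence, then combine \eqref{eqsigma2}, \eqref{ztk}, the Green formula \eqref{green} and \eqref{des2} to force $T_k^\sigma(u)(1+[z,\nu])=0$ on $\partial\Omega$, finishing with H\"older/Jensen to go from $T_k^\sigma$ to $T_k$. The only cosmetic differences are that you use H\"older (with the factor $|\Omega|^{(p-1)/p}$) where the paper uses Young's inequality, and you package the Green-formula step as an exact identity to be substituted rather than as a chain of inequalities.
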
	
 	\begin{proof}
 	First of all we take $T_k^\sigma(u_p)$ as a test function in \eqref{pbp} and we apply the Young inequality in order to obtain (recalling  that the  trace of $u_p$ is zero)
	\begin{equation*}
	\int_{\Omega} |\nabla T_k^{\frac{\sigma-1+p}{p}}(u_p)|\, dx + \int_{\partial \Omega}T_k^\frac{\sigma-1+p}{p}(u_p) d\mathcal{H}^{N-1} \le  \int_{\Omega}  h_p(u_{p})f_p T_k^{\sigma}(u_{p})\, dx + \frac{p-1}{p}\left(\frac{p}{\sigma-1+p}\right)^{\frac{p^{2}}{p-1}}|\Omega|\,.	
	\end{equation*}		 	 	 	
	By lower semicontinuity on the left hand side and  by Lebesgue's dominated convergence Theorem  on the right hand side, letting $p\to 1^+$,  one has
	\begin{equation*}
	\int_{\Omega} |D T_k^\sigma(u)| + \int_{\partial \Omega}T_k^\sigma(u) d\mathcal{H}^{N-1} \le  \int_{\Omega}  h(u)f T_k^\sigma(u) \, dx \overset{\eqref{eqsigma2}}{=} -\int_{\Omega}(T_k^{\sigma}(u))^*\operatorname{div}z\,.
	\end{equation*}	 		 	 	 	
	Now an application of Green's identity \eqref{green} takes to

	\begin{equation}\label{bordo1}	\begin{aligned}
	\int_{\Omega} |D T_k^\sigma(u)| + \int_{\partial \Omega}T_k^\sigma(u) d\mathcal{H}^{N-1} &\le  \int_{\Omega}(z,D T_k^\sigma(u)) - \int_{\partial \Omega} [T_k^\sigma(u) z,\nu]d\mathcal{H}^{N-1}
	\\
	&= \int_{\Omega}|D T_k^\sigma(u)| - \int_{\partial \Omega} T_k^\sigma(u)[z,\nu]d\mathcal{H}^{N-1}\,,\end{aligned}
	\end{equation}
	where we have used \eqref{ztk} and the fact that $z\in \DM(\Omega)$. In particular \eqref{bordo1} implies that
	\begin{equation*}\label{eqbordo}T_k^\sigma(u)( 1 + [z,\nu]) = 0  \ \ \ \text{   $\mathcal{H}^{N-1}$- a.e. on $\partial \Omega$},\end{equation*}
	which means $[z,\nu](x)=-1$ or $T_k^\sigma(u(x))=0$ for $\mathcal{H}^{N-1}$-a.e. $x\in \partial\Omega$.
	Now, let $x\in \partial \Omega$ such that $T_k^{\sigma}(u(x))=0$,  then 		 $$\lim_{\epsilon\to 0^+} \fint_{\Omega\cap B(x,\epsilon)} T_k^{\sigma}(u (y)) dy=0\,.$$
	If $\sigma>1$,  by means of the H\"older inequality one gets,
	$$ \displaystyle
	\begin{array}{l}
	\displaystyle \fint_{\Omega\cap B(x,\epsilon)} T_k(u(y)) dy \leq \left( \fint_{\Omega\cap B(x,\epsilon)} T_k^{\sigma}(u(y)) dy\right)^{\frac{1}{\sigma}}\stackrel{\epsilon\to 0^+}{\longrightarrow} 0\,,
	\end{array}
	$$
	which concludes the proof.
\end{proof}

\begin{proof}[Proof of Theorem \ref{teomain}]
	Let $u_p$ be a solution to \eqref{pbp}. Then by Corollary \ref{exu} there exists $u$ finite almost everywhere such that $u_p$ converges almost everywhere to $u$ as $p \to 1^+$ and such that $T_k(u)\in BV_{\rm loc}(\Omega)$ for any $k>0$. Applying now Lemma \ref{lemma_campoz} there exists a vector field $z\in \DM(\Omega)$ with  $||z||_{L^\infty(\Omega)^N}\le 1$ such that \eqref{def_eqdistr} and \eqref{def_campo} hold. The same lemma gives that $h(u)f$ is integrable. Moreover condition \eqref{def_bordo} is proved in  Lemma \ref{lemmabordo}, and this concludes the proof of Theorem \ref{teomain}.
\end{proof}	

\begin{proof}[Proof of Proposition \ref{propro}]
Let $u$ be a solution to \eqref{pb}, so that there exists $z\in L^\infty(\Omega)^N$ such that $||z||_{L^\infty(\Omega)^N}\le1$ and
\begin{equation}\label{ast0}
  -\Div z=h(u)f\qquad\hbox{in }\mathcal D'(\Omega)\,.
\end{equation}
Consider $v\in W_0^{1,1}(\Omega)$, multiply   \eqref{ast0} by  $|T_k(v)|$  and apply Green's formula, then
\[\int_\Omega h(u)f |T_k(v)|\, dx=\int_\Omega z\cdot \nabla |T_k(v)|\, dx\le||z||_{L^\infty(\Omega)^N} \int_\Omega \big|\nabla |T_k(v)|\big|\, dx\le \int_\Omega|\nabla v|\, dx\,.\]
Letting $k$ go to infinity, we get
\[\left|\int_\Omega h(u)fv\, dx\right|\le \int_\Omega h(u)f|v|\, dx\le \int_\Omega|\nabla v|\, dx\,.\]
Since it holds for every $v\in W_0^{1,1}(\Omega)$, it follows that $h(u)f\in W^{-1,\infty}(\Omega)$ and its norm is less than 1.
\end{proof}	

\begin{proof}[Proof of Proposition \ref{propreg}]
The solution to \eqref{pb} given by Theorem \ref{teomain} is constructed through $u_p$ solutions to \eqref{pb}. Hence, having $T_k^\sigma(u)\in BV(\Omega)$ for any $k>0$ is consequence of Lemma \ref{stimeBV}. Finally, recalling that $h(u)f\in L^1_{\rm loc}(\Omega)$, we infer that if $h(0)=\infty$, then $u>0$  in $\Omega$ since $f>0$  in $\Omega$.
\end{proof}

\subsection{Proof of the uniqueness result}
In this subsection we prove the uniqueness result stated in Theorem \ref{teounique}.
\begin{proof}[Proof of Theorem \ref{teounique}]
	
	Let $u$ be a solution to \eqref{pb}, as   $\operatorname{div}z \in L^1(\Omega)$  one easily gets from  \eqref{ast0}
	\begin{equation}\label{testestese}
	-\int_{\Omega}v \operatorname{div}z\, dx = \int_{\Omega} h(u)fv\, dx\,,
	\end{equation}	
	for all $v\in BV(\Omega)\cap L^\infty(\Omega)$.	Suppose that $u_1$ and $u_2$ are two solutions to \eqref{pb}. Their associated vector fields are, respectively, $z_1$ and $z_2$. At this point we take $v=T_k^\sigma(u_1)- T_k^\sigma(u_2)$ ($k>0$) in the difference among the variational  formulations \eqref{testestese} solved, resp.,  by $u_1$ and $u_2$.
	\\This takes us to
	\begin{equation*}
	\begin{aligned}
	&\int_\Omega (z_1, DT_k^\sigma(u_1)) - \int_\Omega(z_1, DT_k^\sigma(u_2)) -\int_\Omega(z_2, DT_k^\sigma(u_1)) + \int_\Omega(z_2, DT_k^\sigma(u_2))\\
	&- \int_{\partial\Omega}(T_k^\sigma(u_1)-T_k^\sigma(u_2))[z_1,\nu])\, d\mathcal H^{N-1} +\int_{\partial\Omega}(T_k^\sigma(u_1)-T_k^\sigma(u_2))[z_2,\nu])\, d\mathcal H^{N-1}
	\\
	&= \int_\Omega (h(u_1)- h(u_2))f(T_k^\sigma(u_1)-T_k^\sigma(u_2))\, dx\,,
	\end{aligned}
	\end{equation*}
	where we have exploited  \eqref{green}. Moreover, recalling \eqref{def_campo}, one can use again \eqref{cdc} deducing that
	\begin{equation*}
	(z_i,D T_k^\sigma(u_i)) =  |DT_k^\sigma(u_i)| \quad \text{as measures in }\Omega, \quad i=1,2\,,
	\end{equation*}
	and recalling \eqref{def_bordo}, one yields to
	\begin{equation*}\label{unip=1_1}
	\begin{aligned}
	&\int_\Omega |DT_k^\sigma(u_1)| - \int_\Omega(z_1, DT_k^\sigma(u_2)) -\int_\Omega(z_2, DT_k^\sigma(u_1)) + \int_\Omega |DT_k^\sigma(u_2)|\\
	&+  \int_{\partial\Omega}(T_k^\sigma(u_1) +T_k^\sigma(u_1) [z_2,\nu]  )\, d\mathcal H^{N-1} +\int_{\partial\Omega}(T_k^\sigma(u_2) + T_k^\sigma(u_2)[z_1,\nu] )\, d\mathcal H^{N-1}	
	\\
	&\le \int_\Omega (h(u_1)- h(u_2))f(T_k^\sigma(u_1)-T_k^\sigma(u_2))\, dx\le 0\,.
	\end{aligned}
	\end{equation*}
	From the previous inequalities and from the facts that $||z_i||_{L^\infty(\Omega)^N} \le 1$, \eqref{finitetotal1}, and that $[z_i,\nu] \in [-1,1]$ for $i=1,2$, one deduces that 				    	
	$$\displaystyle \int_\Omega (h(u_1)- h(u_2))f(T_k^\sigma(u_1)-T_k^\sigma(u_2))\, dx= 0\,,$$
	which, since $h$ is decreasing and $f>0$,  gives $T_k^\sigma(u_1)=T_k^\sigma(u_2)$ a.e. in $\Omega$ for every $k>0$. In particular $u_1=u_2$ a.e. in $\Omega$ and the proof is concluded.
\end{proof}

\section{The case of a nonnegative $f$}
\label{fnon}
In this section we treat the case of a purely nonnegative datum $f$. The main point here is that one is not able to   show the strict positivity of the solution as a consequence of \eqref{uzero}. As we will see in Section \ref{examples} below, this is not a merely technical issue but a structural one and it is closely related to a non-uniqueness phenomenon.

We define the following function
$$\psi(s) = \begin{cases}1 \ \ \ &\text{if } h(0)<\infty, \\ \chi_{\{s>0\}} \ \ \ &\text{if } h(0)=\infty. \end{cases}$$
Now we are ready to give a  meaning to a solution of  \eqref{pb} in the case of a nonnegative datum.
	\begin{defin}\label{weakdefnonnegative}
	A nonnegative function $u$ having $T_k(u)\in BV_{\rm{loc}}(\Omega)$, $T_k^\sigma(u)\in BV(\Omega)$ for any $k>0$ and $\chi_{\{u>0\}}\in BV_{\rm loc}(\Omega)$ is a solution to problem \eqref{pb} if $h(u)f\in L^{1}_{\rm{loc}}(\Omega)$, there exists  $z\in \DM_{\rm loc}(\Omega)$ such that $||z||_{L^\infty(\Omega)^N}\le 1$  and
	\begin{align}
	&-\psi^*(u)\operatorname{div}z =  h(u)f \ \ \ \ \ \text{as measures in } \Omega\,, \label{nondef_distr}
	\\
	&(z,DT_k(u))=|DT_k(u)| \label{nondef_campo} \ \ \ \ \text{as measures in } \Omega \text{ for any k>0}\,,
	\\
	&  T^\sigma_k(u(x)) + [T_k^\sigma(u) z,\nu] (x)= 0 \label{nondef_bordo}\ \ \ \text{for  $\mathcal{H}^{N-1}$-a.e. } x \in \partial\Omega \text{ and for any } k>0\,.
	\end{align}
\end{defin}	

\begin{remark}\label{rema}
It should be noted that Definition \ref{weakdefnonnegative} (needed  for purely nonnegative data $f$) is nothing but a direct extension of   Definition \ref{weakdef}. Indeed if $f>0$ in   $\Omega$ and $h(0)=\infty$ (the finite case being simpler)  the request  $h(u)f\in L^1_{\rm loc}(\Omega)$ gives that $u>0$ in $\Omega$ and condition \eqref{nondef_distr} becomes $-\operatorname{div}z = h(u)f$. This allows us to apply Lemma $5.3$ of \cite{dgop} in order to deduce that $z\in \DM (\Omega)$, and then, recalling \eqref{des2}, one has that
	$$[T_k^\sigma(u) z,\nu]= T_k^\sigma(u)[ z,\nu]\,,$$
	which implies that \eqref{nondef_bordo} becomes
	$$T^\sigma_k(u(x))(1 + [z,\nu] (x))= 0\ \ \ \text{for  $\mathcal{H}^{N-1}$-a.e. } x \in \partial\Omega \text{ and for any } k>0\,.$$
	This identity implies  \eqref{def_bordo}. 		 				
\end{remark}

We state and prove the existence theorem for this case:

\begin{theorem}\label{existencenonnegative}
	Let $0\le f\in L^{1}(\Omega)$ and let $h$ satisfy \eqref{h1} and \eqref{h2} with $h(\infty)=0$. Then there exists a solution $u$ to problem \eqref{pb} in the sense of Definition \ref{weakdefnonnegative}.
\end{theorem}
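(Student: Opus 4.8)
The plan is to reproduce the architecture of the proof of Theorem \ref{teomain}, taking advantage of the fact that the approximation scheme \eqref{pbp} and all the a priori bounds (Lemmas \ref{lemmaphi}, \ref{lemmapotenzaTk}, \ref{lemmalocaleTk}, \ref{stimeBV}) as well as the compactness of Corollary \ref{exu} were already stated for a \emph{general} nonnegative $f\in L^1(\Omega)$. Thus, up to a subsequence, $u_p\to u$ a.e. in $\Omega$ with $u$ finite a.e., $T_k(u)\in BV_{\rm loc}(\Omega)$ and $T_k^\sigma(u)\in BV(\Omega)$ for every $k>0$, with $\ast$-weak convergence of the rescaled gradients. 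Moreover, the construction of the vector field $z$ in the first half of the proof of Lemma \ref{lemma_campoz} (diagonal extraction of the weak limits of $|\nabla T_k(u_p)|^{p-2}\nabla T_k(u_p)$ over an exhaustion of $\Omega$, then a level-set patching) never uses the sign of $f$: it yields $z\in L^\infty(\Omega)^N$ with $\|z\|_{L^\infty(\Omega)^N}\le 1$, and, testing \eqref{pbp} with $S_n(u_p)\varphi$ and passing to the limit in $p$ and then in $n$ by Fatou's lemma exactly as there, it gives $z\in\DM_{\rm loc}(\Omega)$, $h(u)f\in L^1_{\rm loc}(\Omega)$, and the one-sided relation $-\operatorname{div}z\ge h(u)f\ge 0$ as (locally finite, nonnegative) measures. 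Note also that $h(u)f\in L^1_{\rm loc}(\Omega)$ forces $\{u=0\}\subset\{f=0\}$ up to a Lebesgue-null set whenever $h(0)=\infty$; contrary to the case $f>0$, this no longer gives $u>0$, which is the crux of the matter.

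If $h(0)<\infty$, then $h$ is bounded (being continuous on $[0,\infty)$ with a finite limit at infinity), $\psi\equiv 1$, and $h_p(u_p)f_p S_n(u_p)\varphi\le\|h\|_{L^\infty}f\varphi\in L^1(\Omega)$, so dominated convergence upgrades the previous inequality to $-\operatorname{div}z=h(u)f$, i.e. \eqref{nondef_distr}. The genuinely new case is $h(0)=\infty$, where the factor $\psi^*(u)=\chi_{\{u>0\}}^*$ must be carried along; one first checks, by a cut-off argument based on $V_\delta$ (recall that $1-V_\delta(s)\uparrow\chi_{\{s>0\}}$ as $\delta\downarrow0^+$), that $\chi_{\{u>0\}}\in BV_{\rm loc}(\Omega)$, so that the left-hand side of \eqref{nondef_distr} is meaningful. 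To obtain \eqref{nondef_distr} I would, for $\delta\notin\mathcal N$, split the approximate reaction term as $\int_{\{u_p\ge\delta\}}h_p(u_p)f_p S_n(u_p)\varphi\,dx+\int_{\{u_p<\delta\}}h_p(u_p)f_p S_n(u_p)\varphi\,dx$. On $\{u_p\ge\delta\}$ one has $h_p(u_p)\le\sup_{s\in[\delta,\infty)}h(s)<\infty$, so Lebesgue's theorem (first $p\to1^+$, then $n\to\infty$, then $\delta\to0^+$) identifies its limit with $\int_{\{u>0\}}h(u)f\varphi\,dx=\int_\Omega h(u)f\varphi\,dx$, using $\{u=0\}\subset\{f=0\}$. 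For the remaining piece one tests \eqref{pbp} with $V_\delta(u_p)\varphi$, exactly as in \eqref{limite4}--\eqref{limite5}, to bound $\limsup_{p\to1^+}\int_{\{u_p<\delta\}}h_p(u_p)f_p S_n(u_p)\varphi\,dx$ by $\int_\Omega z\cdot\nabla\varphi\,V_\delta(u)\,dx$. The delicate point, which I expect to be the main obstacle, is the passage to the limit $\delta\to0^+$ in this last integral and in the companion identity produced by the $V_\delta(u_p)\varphi$ test: one has to show that $-\operatorname{div}z$ carries no mass on $J_{\chi_{\{u>0\}}}$ and that $-\operatorname{div}z\llcorner\{u>0\}\le h(u)f$, which requires a careful estimate of $h_p(u_p)$ on the annular region $\{\delta<u_p<2\delta\}$; this is exactly where the loss of positivity (and the non-uniqueness) of the nonnegative case becomes visible, since here \eqref{uzero} is not available. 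Once this is achieved, combining the two pieces gives $-\psi^*(u)\operatorname{div}z=h(u)f$ as measures, i.e. \eqref{nondef_distr}.

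It then remains to recover \eqref{nondef_campo} and \eqref{nondef_bordo}, and here the arguments of Lemmas \ref{lemma_campoz} and \ref{lemmabordo} transfer with essentially no change. Testing \eqref{nondef_distr} with $(\rho_\epsilon\ast T_k^\sigma(u))\varphi$ and letting $\epsilon\to0^+$ gives $-(T_k^\sigma(u))^*\operatorname{div}z=h(u)f\,T_k^\sigma(u)$ as measures (the factor $\psi^*(u)$ drops out, because $(T_k^\sigma(u))^*$ vanishes at the Lebesgue density points of $\{u=0\}$ while $\operatorname{div}z$ carries no mass on $J_{\chi_{\{u>0\}}}$), so \eqref{eqsigma2} holds verbatim. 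Then, testing \eqref{pbp} with $T_k^\sigma(u_p)S_n(u_p)\varphi$, using lower semicontinuity, \eqref{condrin} and the domination $h_p(u_p)f_p T_k^\sigma(u_p)S_n(u_p)\varphi\le\big(c+k^\sigma\sup_{s\in[s_1,\infty)}h(s)\big)f\varphi$, one arrives as in Lemma \ref{lemma_campoz} at $\int_\Omega\varphi|DT_k^\sigma(u)|\le\int_\Omega\varphi\,(z,DT_k^\sigma(u))$ for all $0\le\varphi\in C^1_c(\Omega)$, hence $(z,DT_k^\sigma(u))=|DT_k^\sigma(u)|$; \eqref{cdc} (Proposition $4.5$ of \cite{CDC}) transfers this to $(z,DT_k(u))=|DT_k(u)|$ for every $k>0$, which is \eqref{nondef_campo}. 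Finally, since $\operatorname{div}z$ carries no mass on $J_{\chi_{\{u>0\}}}$ and equals $-h(u)f$ on $\{u>0\}$, one has $(T_k^\sigma(u))^*\in L^1(\Omega,\operatorname{div}z)$, so $T_k^\sigma(u)z\in\DM(\Omega)$ and the Green formula \eqref{green} applies; testing \eqref{pbp} with $T_k^\sigma(u_p)$ (whose trace vanishes), using lower semicontinuity and \eqref{eqsigma2}, and then \eqref{green} together with $(z,DT_k^\sigma(u))=|DT_k^\sigma(u)|$, one gets $\int_{\partial\Omega}\big(T_k^\sigma(u)+[T_k^\sigma(u)z,\nu]\big)\,d\mathcal H^{N-1}\le0$. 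Here, unlike in Lemma \ref{lemmabordo} where \eqref{des2} was at our disposal, the bracket cannot be split (only $z\in\DM_{\rm loc}(\Omega)$), but since $T_k^\sigma(u)\ge0$ and $\|z\|_{L^\infty(\Omega)^N}\le1$ one has $[T_k^\sigma(u)z,\nu]\ge-T_k^\sigma(u)\big|_{\partial\Omega}$, so the integrand is nonnegative and therefore vanishes $\mathcal H^{N-1}$-a.e., which is exactly \eqref{nondef_bordo}. Collecting \eqref{nondef_distr}, \eqref{nondef_campo} and \eqref{nondef_bordo}, the function $u$ is a solution to \eqref{pb} in the sense of Definition \ref{weakdefnonnegative}.
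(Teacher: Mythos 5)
Your overall architecture is right, and the parts you transfer verbatim (a priori estimates, compactness, construction of $z$, the Fatou inequality $-\operatorname{div}z\ge h(u)f$, the pairing identity \eqref{nondef_campo} via \eqref{cdc}, and the boundary condition \eqref{nondef_bordo} obtained without splitting the bracket, using $[T_k^\sigma(u)z,\nu]\ge -T_k^\sigma(u)$ on $\partial\Omega$) are consistent with how the paper argues. The problem is that the one genuinely new point of this theorem --- proving \eqref{nondef_distr} and $\chi_{\{u>0\}}\in BV_{\rm loc}(\Omega)$ when $h(0)=\infty$ --- is exactly the step you leave open (``Once this is achieved\dots''), and the route you sketch for it does not work. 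Repeating the $V_\delta$-splitting of Lemma \ref{lemma_campoz} leads, after $p\to1^+$ and $\delta\to0^+$, to the term $\int_{\{u=0\}}z\cdot\nabla\varphi\,dx$, which in the positive case was killed by \eqref{uzero}; for merely nonnegative $f$ this term has no reason to vanish (and must not, since the limit equation is the weighted one), so you cannot conclude that the small-level contribution disappears. Likewise, ``$\operatorname{div}z$ carries no mass on $J_{\chi_{\{u>0\}}}$'' is a \emph{consequence} of \eqref{nondef_distr}, not an input you can obtain by estimating $h_p(u_p)$ on $\{\delta<u_p<2\delta\}$; as stated, your plan is circular at the decisive point. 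The same gap propagates to your derivation of the analogue of \eqref{eqsigma2}, which you justify by invoking precisely this unproved absence of mass.

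The paper's device, which is the idea missing from your proposal, is to test \eqref{pbp} with $R_\delta(u_p)\varphi$, where $R_\delta=1-V_\delta$ and $0\le\varphi\in C^1_c(\Omega)$. Young's inequality isolates $\int_\Omega|\nabla R_\delta(u_p)|\varphi\,dx$ on the left, and lower semicontinuity plus dominated convergence (the reaction term is supported in $\{u_p\ge\delta\}$, where $h_p(u_p)$ is bounded) give, as $p\to1^+$,
\begin{equation*}
\int_\Omega \varphi\,|DR_\delta(u)|+\int_\Omega z\cdot\nabla\varphi\,R_\delta(u)\,dx\le\int_\Omega h(u)f\,R_\delta(u)\varphi\,dx\,.
\end{equation*}
Letting $\delta\to0^+$ this yields simultaneously $\chi_{\{u>0\}}\in BV_{\rm loc}(\Omega)$ (your ``cut-off argument'' needs exactly this uniform bound; pointwise convergence $R_\delta(u)\to\chi_{\{u>0\}}$ alone is not enough) and, via the definition \eqref{dist1} of the pairing together with $(z,D\chi_{\{u>0\}})\le|D\chi_{\{u>0\}}|$, the inequality $-\int_\Omega\chi^*_{\{u>0\}}\varphi\,\operatorname{div}z\le\int_\Omega h(u)f\varphi\,dx$, where the right-hand side is rewritten using $\{u=0\}\subset\{f=0\}$. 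The reverse inequality is then obtained by inserting $\varphi=(\chi_{\{u>0\}}\ast\rho_\epsilon)\phi$ into the Fatou inequality $-\int_\Omega\varphi\,\operatorname{div}z\ge\int_\Omega h(u)f\varphi\,dx$ and letting $\epsilon\to0^+$. The two inequalities give \eqref{nondef_distr} without ever needing to show directly that the small-level term of your splitting vanishes. Until you replace your ``main obstacle'' paragraph with an argument of this type, the proof is incomplete at its core.
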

\begin{proof}
We only sketch the proof, underlining the main differences with respect to the one of Theorem \ref{teomain}. We consider $u_p$ solutions to \eqref{pbp} and we apply Corollary \ref{exu} in order to deduce the existence of a function $u$ (finite almost everywhere) such that, up to subsequences, $u_p$ converges almost everywhere to $u$ as $p\to 1^+$.  Moreover, one has $T_k^\sigma(u)\in BV(\Omega)$ and $T_k(u) \in BV_{\rm loc}(\Omega)$ for any $k>0$. The existence of a vector field  $z\in \mathcal{D}\mathcal{M}^\infty_{\rm{loc}}(\Omega)$ with $||z||_{L^\infty(\Omega)^N}\le 1$ (weak limit in $L^q(\Omega)^N$ of $(|\nabla u_p|^{p-2}\nabla u_p)$ for every $1\le q<\infty$) follows exactly as in case $f>0$. Furthermore, analogously to the proof of Lemma \ref{lemma_campoz}, it can be proven that $ h(u)f\in L^{1}_{\rm{loc}}(\Omega)$ and that \eqref{nondef_campo} holds. The proof of \eqref{nondef_bordo} follows as in Lemma \ref{lemmabordo} up to:

\begin{equation*}\label{bordo2}	\begin{aligned}
	\int_{\Omega} |D T_k^\sigma(u)| + \int_{\partial \Omega}T_k^\sigma(u) d\mathcal{H}^{N-1} &\le  \int_{\Omega}(z,D T_k^\sigma(u)) - \int_{\partial \Omega} [T_k^\sigma(u) z,\nu]d\mathcal{H}^{N-1}\\
	&\le  \int_{\Omega}|D T_k^\sigma(u)| - \int_{\partial \Omega} [T_k^\sigma(u) z,\nu]d\mathcal{H}^{N-1}\,,
	\end{aligned}
	\end{equation*}
	from which \eqref{nondef_bordo} follows.

\medskip

	Now, we need to show that \eqref{nondef_distr} holds and that $\chi_{\{u>0\}}\in BV_{\rm loc}(\Omega)$. If $h(0)<\infty$ the proof of \eqref{nondef_distr} is trivial; hence we assume that $h(0)=\infty$ and we test \eqref{pbp} with  $R_\delta(u_p)\varphi$ where  $R_\delta(s):= 1-V_\delta(s)$ ($V_\delta$ is defined in \eqref{Vdelta}) and $0\le\varphi\in C^1_c(\Omega)$. After an application of the Young inequality, one gains
	\begin{equation*}\label{minore1}
	\int_{\Omega} |\nabla R_\delta(u_p)|\varphi\, dx + \int_{\Omega}|\nabla u_p|^{p-2} \nabla u_p\cdot \nabla \varphi R_\delta(u_p) \, dx \le   \frac{p-1}{p}\int_{\Omega}  \varphi\, dx  + \int_{\Omega}h_p(u_p)f_pR_\delta(u_p)\varphi\, dx\,,
	\end{equation*}
	and we intend to  pass to the limit first as $p\to1^+$, and then as $\delta\to 0^+$.
	First of all we can pass by lower semicontinuity in the first term on the left hand;  the second term easily passes to the limit as well. Regarding  the right hand side we have that the first term vanishes while  the second term passes to the limit via dominated convergence (recall that we are integrating where $u_p\ge \delta$).
	Hence one has
	\begin{align*}
	\int_{\Omega} |D R_\delta(u)|\varphi\, dx + \int_{\Omega}z\cdot \nabla \varphi R_\delta(u)\, dx\le \int_{\Omega}h(u)fR_\delta(u)\varphi\, dx\,.
	\end{align*}
	Since $h(u)f\in L^1_{\rm loc}(\Omega)$ and $z\in L^\infty(\Omega)^N$ one can pass to the limit the previous inequality as $\delta \to 0^+$, obtaining
	\begin{equation*}\label{equival}
	\int_{\Omega} |D \chi_{\{u>0\}}|\varphi + \int_{\Omega}z\cdot \nabla \varphi \chi_{\{u>0\}}\, dx \le \int_{\Omega}h(u)f\chi_{\{u>0\}}\varphi\, dx = \int_{\Omega}h(u)f\varphi\, dx\,,
	\end{equation*} 					
	where the last equality holds since $h(u)f \in L^1_{\rm loc}(\Omega)$ implies that $\{u=0\}\subset\{f=0\}$. Moreover this inequality also provides that $\chi_{\{u>0\}}\in BV_{\rm loc}(\Omega)$.
	Therefore, it follows from \eqref{dist1} that one has
	\begin{align}\label{minore4bis}
	-\int_{\Omega}\chi^*_{\{u>0\}}\varphi\operatorname{div}z \le \int_{\Omega}h(u)f\varphi\, dx\,.
	\end{align}
	We just need to prove the reverse inequality to  \eqref{minore4bis}. From the weak formulation of \eqref{pbp}, taking a nonnegative $\varphi\in C_c^\infty(\Omega)$ and letting $p\to 1^+$ by the Fatou Lemma one has
	\begin{equation*}
		-\int_\Omega\varphi\operatorname{div}z\, dx=\int_\Omega z\cdot \nabla \varphi\, dx \ge \int_\Omega h(u)f\, dx\,.
	\end{equation*}
	In particular, we can take $\varphi = (\chi_{\{u>0\}}* \rho_\epsilon)\phi$ where $0\le \phi \in C^1_c(\Omega)$ and $\rho_\epsilon$ is again a sequence   of standard mollifiers. Thus one can pass to the limit in $\epsilon$, obtaining  	
	\begin{equation*}\label{maggiore}
	-\int_{\Omega}\phi\chi^*_{\{u>0\}}\operatorname{div}z \ge \int_{\Omega}h(u)f\chi_{\{u>0\}}\phi\, dx =  		\int_{\Omega}h(u)f\phi\, dx \ \ \ \forall\phi \in C^1_c(\Omega), \ \ \phi \ge 0\,,
	\end{equation*}
	which proves \eqref{nondef_distr}. This concludes the proof.
\end{proof}

\section{Remarks and examples}

\subsection{The case of a nonnegative $h$}
\label{hzero}
We want to show that in case in which $h$ touches zero then one can find  a bounded solution even if $f$ is merely integrable. Let us suppose that there exists $\tilde{s} >0$ such that $h(\tilde{s})=0$ and $h(s)>0$ for all $0<s<\tilde{s}$. We consider the approximation given by \eqref{pbp} where
$$h_p(s)= \begin{cases}
T_{\frac{1}{p-1}}(h(s)), \ &0\le s\le \tilde{s}\,,\\
0 \,,		&s>\tilde{s}\,.
\end{cases}
$$
and taking $(u_p - T_{\tilde{s}}(u_p))^+$ as a test function in \eqref{pbp} we deduce that
$$\int_{\Omega} |\nabla (u_p - T_{\tilde{s}}(u_p))^+|^p\, dx \le 0\,,$$
which implies that $u_p \le \tilde{s}$. Hence, reasoning as in the proof of Theorem \ref{teomain},  one can prove that there exists a solution $u$ to problem \eqref{pb} which belongs to $L^\infty(\Omega)$ and satisfies $||u||_{L^{\infty}(\Omega)}\le \tilde{s}$.

It is worth remarking that the solution to this problem may not be unique (see Example \ref{ejemplo-no-unicidad} below).

\subsection{The case $h(\infty)>0$}
\label{hpositiva}

Let us now focus instead on the case in which $h(s)>0$ and  $$\displaystyle h(\infty)=\lim_{s\to\infty}h(s)>0\,.$$
 Then, the function $h$ is bounded from below by  $m>0$.
Assuming that for some positive data  $f$ there exists a solution $u$ with vector field $z$, it holds
\begin{equation*}
-\Div z=h(u)f\ge m f \text{ in }\Omega\,.
\end{equation*}
We use now a test function $v\in W_0^{1,1}(\Omega)$ to obtain
\begin{equation*}
\int_\Omega |\nabla v|\,dx\ge m \int_\Omega f |v|\,dx\,,
\end{equation*}
which implies
\begin{equation*}
\left|\int_\Omega fv\,dx \right|\le \int_\Omega f|v|\,dx \le \frac{1}{m} \int_\Omega|\nabla v|\,dx\,,
\end{equation*}
and one may deduce that, if exists a solution, then  $f\in W^{-1,\infty}(\Omega)$ and $||f||_{W^{-1,\infty}(\Omega)}\le \frac{1}{m}$. Moreover, the closer to $0$ is function $h$, the more general conditions over $f$ in order to have a solution.

One may also guess that, suitably  weakening the definition,  one can get existence of a solution; the proof, as $h(s)\geq m>0$,  being a step-by-step replica of the one in \cite{MST2}. Only observe that in this case the {\it solution} $u$ may be, in general,  $+\infty$ on a set of positive measure as well as the vector field $z$ could be unbounded.

\subsection{Examples}\label{examples}

This section is devoted to show some explicit examples of solutions;  we look for radial solutions.  Let  $\Omega= B_R(0)$ be a  ball centered at the origin  of radius $R>0$. Let  $\displaystyle h(s)=s^{-\gamma}$, $\gamma> 0$, that is we focus on
\begin{equation}\label{prob-examples}
\left\{
\begin{array}{ll}
\displaystyle -\Div \left(\frac{Du}{|Du|}\right) =\dfrac{f(x)}{u^\gamma} & \mbox{ in }  B_R(0) \,, \\[3mm]
u(x)\ge 0 & \mbox{ in }  B_R(0) \,,\\[2mm]
u(x)=0 & \mbox{ on } \partial B_R(0) \,,
\end{array}\right.
\end{equation}
for various type of data $f$.
We look for non-increasing radial solutions, i.e., $u(x)=g(|x|)$ with $g'(s)\le0$ for $0\le s \le R$. Since solutions have to be  nonnegative, the boundary condition becomes either $u\big|_{\partial B_R(0)}=0$ or $[z,\nu]=-1$. We remark that if $g'(s)<0$ in a zone of positive measure, then $z(x)=\frac{Du}{|Du|}=-\frac{x}{|x|}$ and $-\Div z=\frac{N-1}{|x|}$.

In the first  example we construct a solution for a datum $f$ with  summability between 1 and $N$:

\begin{example}\rm\label{esempio1}
We start taking the positive datum $f(x)=\frac{N-1}{|x|^q}$ with $1<q<N$ in problem \eqref{prob-examples}:
\begin{equation*}
\left\{
\begin{array}{ll}
\displaystyle -\Div \left(\frac{Du}{|Du|}\right) =\dfrac{N-1}{|x|^q}\dfrac{1}{u^\gamma(x)} & \mbox{ in }  B_R(0) \,, \\[3mm]
u(x)\ge 0 & \mbox{ in }  B_R(0) \,,\\[2mm]
u(x)=0 & \mbox{ on } \partial B_R(0)\,.
\end{array}\right.
\end{equation*}
Assuming that solution $u$ is non-constant (which implies that
the vector field is given by $z=-x/|x|$) and taking $|x|=r$, then equation becomes
\begin{equation*}
\dfrac{N-1}{r} = - \Div z = \dfrac{N-1}{r^q}\dfrac{1}{g^\gamma (r)}\,,
\end{equation*}
and therefore, the unique solution to this problem is given by $u(x)=g(|x|)={|x|^{\frac{1-q}{\gamma}}}$. Observe that  $u$ is positive, it has truncation in   $ BV(B_R(0))$,  but is unbounded.  Moreover,  it holds the equality $(z(x),D T_k(u(x)))=|D T_k(u(x))|$, for any $k>0$,  as measures in $B_R(0)$ and the boundary condition $[z,\nu]=-1$.

Note also that for $q\to 1^+$ solutions formally tend to a bounded function as the ones found in
\cite{dgop}. Also notice that, if $\gamma=1$,  as $q\to N^-$  (i.e. the datum goes narrowly outside $L^1$) then $u$, in the limit,  comes out to $L^{1^*}_{}(B_R(0))$, yielding  a sort of {\it asymptotic optimality} to our result.

\medskip

Furthermore,  assume $R>1$ and observe that if we formally let $\gamma\to 0^+$ then the solutions tend to
\begin{equation*}
u_{0}(x)=\left\{\begin{array}{lcl}
+\infty & \text{ if } & 0<|x|<1\,,\\[3mm]
1 & \text{ if } & |x|=1\,,\\[3mm]
0 & \text{ if } & 1<|x|<R\,,
\end{array}\right.
\end{equation*}
recovering the blow-up/degenerate phenomenon in \cite[Theorem $3.1$]{MST1}.

\begin{figure}[h]
\centering
\begin{tikzpicture}
\fill[color=blue!10] (0,0) circle (2cm);
\fill[color=white] (0,0) circle (1cm);
\draw[thick,dotted, orange] (0.95,0)--(0.95,-0.31);
\draw[thick,dotted, orange] (0.95,0.07)--(0.95,0.31);
\draw[thick,dotted, orange] (0.9,0)--(0.9,-0.43);
\draw[thick,dotted, orange] (0.9,0.07)--(0.9,0.43);
\draw[thick,dotted, orange] (0.85,0)--(0.85,-0.53);
\draw[thick,dotted, orange] (0.85,0.07)--(0.85,0.53);
\draw[thick,dotted, orange] (0.8,0)--(0.8,-0.6);
\draw[thick,dotted, orange] (0.8,0.07)--(0.8,0.6);
\draw[thick,dotted, orange] (0.75,0.07)--(0.75,0.66);
\draw[thick,dotted, orange] (0.75,0)--(0.75,-0.66);
\draw[thick,dotted, orange] (0.7,0.07)--(0.7,0.71);
\draw[thick,dotted, orange] (0.7,0)--(0.7,-0.71);
\draw[thick,dotted, orange] (0.65,0.07)--(0.65,0.76);
\draw[thick,dotted, orange] (0.65,0)--(0.65,-0.76);
\draw[thick,dotted, orange] (0.6,0.07)--(0.6,0.8);
\draw[thick,dotted, orange] (0.6,0)--(0.6,-0.8);
\draw[thick,dotted, orange] (0.55,0.07)--(0.55,0.83);
\draw[thick,dotted, orange] (0.55,0)--(0.55,-0.83);
\draw[thick,dotted, orange] (0.5,0.07)--(0.5,0.87);
\draw[thick,dotted, orange] (0.5,0)--(0.5,-0.87);
\draw[thick,dotted, orange] (0.45,0.07)--(0.45,0.89);
\draw[thick,dotted, orange] (0.45,0)--(0.45,-0.89);
\draw[thick,dotted, orange] (0.4,0.07)--(0.4,0.92);
\draw[thick,dotted, orange] (0.4,0)--(0.4,-0.92);
\draw[thick,dotted, orange] (0.35,0.07)--(0.35,0.94);
\draw[thick,dotted, orange] (0.35,0)--(0.35,-0.94);
\draw[thick,dotted, orange] (0.3,0.07)--(0.3,0.95);
\draw[thick,dotted, orange] (0.3,0)--(0.3,-0.95);
\draw[thick,dotted, orange] (0.25,0.07)--(0.25,0.97);
\draw[thick,dotted, orange] (0.25,0)--(0.25,-0.97);
\draw[thick,dotted, orange] (0.2,0.07)--(0.2,0.98);
\draw[thick,dotted, orange] (0.2,0)--(0.2,-0.98);
\draw[thick,dotted, orange] (0.15,0.07)--(0.15,0.99);
\draw[thick,dotted, orange] (0.15,0)--(0.15,-0.99);
\draw[thick,dotted, orange] (0.1,0.07)--(0.1,0.99);
\draw[thick,dotted, orange] (0.1,0)--(0.1,-0.99);
\draw[thick,dotted, orange] (0.05,0.07)--(0.05,1);
\draw[thick,dotted, orange] (0.05,0)--(0.05,-1);
\draw[thick,dotted, orange] (-0.95,0)--(-0.95,-0.31);
\draw[thick,dotted, orange] (-0.95,0.07)--(-0.95,0.31);
\draw[thick,dotted, orange] (-0.9,0)--(-0.9,-0.43);
\draw[thick,dotted, orange] (-0.9,0.07)--(-0.9,0.43);
\draw[thick,dotted, orange] (-0.85,0)--(-0.85,-0.53);
\draw[thick,dotted, orange] (-0.85,0.07)--(-0.85,0.53);
\draw[thick,dotted, orange] (-0.8,0)--(-0.8,-0.6);
\draw[thick,dotted, orange] (-0.8,0.07)--(-0.8,0.6);
\draw[thick,dotted, orange] (-0.75,0.07)--(-0.75,0.66);
\draw[thick,dotted, orange] (-0.75,0)--(-0.75,-0.66);
\draw[thick,dotted, orange] (-0.7,0.07)--(-0.7,0.71);
\draw[thick,dotted, orange] (-0.7,0)--(-0.7,-0.71);
\draw[thick,dotted, orange] (-0.65,0.07)--(-0.65,0.76);
\draw[thick,dotted, orange] (-0.65,0)--(-0.65,-0.76);
\draw[thick,dotted, orange] (-0.6,0.07)--(-0.6,0.8);
\draw[thick,dotted, orange] (-0.6,0)--(-0.6,-0.8);
\draw[thick,dotted, orange] (-0.55,0.07)--(-0.55,0.83);
\draw[thick,dotted, orange] (-0.55,0)--(-0.55,-0.83);
\draw[thick,dotted, orange] (-0.5,0.07)--(-0.5,0.87);
\draw[thick,dotted, orange] (-0.5,0)--(-0.5,-0.87);
\draw[thick,dotted, orange] (-0.45,0.07)--(-0.45,0.89);
\draw[thick,dotted, orange] (-0.45,0)--(-0.45,-0.89);
\draw[thick,dotted, orange] (-0.4,0.07)--(-0.4,0.92);
\draw[thick,dotted, orange] (-0.4,0)--(-0.4,-0.92);
\draw[thick,dotted, orange] (-0.35,0.07)--(-0.35,0.94);
\draw[thick,dotted, orange] (-0.35,0)--(-0.35,-0.94);
\draw[thick,dotted, orange] (-0.3,0.07)--(-0.3,0.95);
\draw[thick,dotted, orange] (-0.3,0)--(-0.3,-0.95);
\draw[thick,dotted, orange] (-0.25,0.07)--(-0.25,0.97);
\draw[thick,dotted, orange] (-0.25,0)--(-0.25,-0.97);
\draw[thick,dotted, orange] (-0.2,0.07)--(-0.2,0.98);
\draw[thick,dotted, orange] (-0.2,0)--(-0.2,-0.98);
\draw[thick,dotted, orange] (-0.15,0.07)--(-0.15,0.99);
\draw[thick,dotted, orange] (-0.15,0)--(-0.15,-0.99);
\draw[thick,dotted, orange] (-0.1,0.07)--(-0.1,0.99);
\draw[thick,dotted, orange] (-0.1,0)--(-0.1,-0.99);
\draw[thick,dotted, orange] (-0.05,0.07)--(-0.05,1);
\draw[thick,dotted, orange] (-0.05,0)--(-0.05,-1);
\draw[thick,dotted, orange] (0,0.07)--(0,1);
\draw[thick,dotted, orange] (0,0)--(0,-1);
\draw[thick] (0,0) circle (1cm);
\draw (0,0) circle (2cm);
\draw [->][thick] (0,0)--(0.7,0.7);
\draw [->][thick] (0,0)--(1.9,0.5);
\node at (0.4,0.6) {{\footnotesize $1$}};
\node at (1.3,0.5) {{\footnotesize $R$}};
\node at (-1.5,2.2) {{\footnotesize $B_R(0)$}};
\node at (-0.1,-0.4) {{\footnotesize $u_0=+\infty$}};
\node at (-1,0) {{\footnotesize $u_0=1$}};
\node at (0,1.5) {{\footnotesize $u_0=0$}};
\end{tikzpicture}
\caption{The limit $u_0(x)$ as $\gamma\to0^+$.}
\end{figure}


\end{example}

The next example illustrates the case of a generic nonnegative datum $f$ (i.e. the situation of Section \ref{fnon}) and shows that solutions may vanishes on a set of positive measure.  Here for simplicity we assume $\gamma=1$.

\begin{example}\rm
Consider
\begin{equation*}
f(x)=\left\{\begin{array}{lcl}
\frac{N}{\rho} & \text{ if } & 0<|x|\le\rho\,,\\[3mm]
0 & \text{ if } & \rho<|x|<R\,,
\end{array}\right.
\end{equation*}
for some $0<\rho<R$; we define the vector field
\begin{equation*}
z(x)=\left\{\begin{array}{lcl}
\frac{-x}{\rho} & \text{ if } & 0<|x|\le\rho\,,\\[3mm]
\frac{-x}{|x|^N}\rho^{N-1} & \text{ if } & \rho<|x|<R\,,
\end{array}\right.
\end{equation*}
which implies
\begin{equation*}
-\Div z(x)=\left\{\begin{array}{lcl}
\frac{N}{\rho} & \text{ if } & 0<|x|\le \rho\,,\\[3mm]
0 & \text{ if } & \rho<|x|<R\,.
\end{array}\right.
\end{equation*}
Then the solution of \eqref{prob-examples} (with $\gamma=1$) is given by
\begin{equation*}
u(x)=\left\{\begin{array}{lcl}
1 & \text{ if } & 0<|x|\le\rho\,,\\[3mm]
0 & \text{ if } & \rho<|x|<R\,.
\end{array}\right.
\end{equation*}
In fact, it is easy to check that $u \in BV(B_R(0))$, boundary condition holds as well as $(z(x),Du(x))=|Du(x)|$ since
$$Du(x)=(u^+(x)-u^-(x)) \nu(x)\bk\mathcal H^{N-1}\res_{\{|x|=\rho\}} =\frac{-x}{\rho}\mathcal H^{N-1}\res_{\{|x|=\rho\}}\,,$$
so that
\begin{equation*}
(z(x),Du(x))=\left(\frac{-x}{\rho},\frac{-x}{\rho}\right)\mathcal H^{N-1}\res_{\{|x|=\rho\}}=|Du(x)|\,.
\end{equation*}
\end{example}

In this last example we emphasize that  the previous degeneracy is not caused by the smoothness of the datum $f$ and, moreover, we provide a non-uniqueness instance.
\begin{example}\label{ejemplo-no-unicidad}\rm
We now consider, in problem \eqref{prob-examples} with $\gamma=1$, the nonnegative datum
\begin{equation*}
f(x)=\left\{\begin{array}{lcl}
\dfrac{N-1}{|x|^q} & \text{ if } & 0<|x|\le\rho\,,\\[3mm]
0 & \text{ if } & \rho<|x|<R\,,
\end{array}\right.
\end{equation*}
with $0<\rho<R$, $1<q<N$. If $0\le r\le \rho$, the vector field is given by $z(x)=-x/|x|$ and the solution is $u(x)=1/|x|^{q-1} $;   observe that in this case $u$ itself is $BV(B_R (0))$. Taking the vector field $z(x)=-x|x|^{-N}\rho^{N-1}$  in $\rho < r \le R$, it is easy to check that the solution is given by $u(x)=\rho^{1-q}$ for $\rho<|x|<R$. Since the boundary condition holds ($[z,\nu]=(-x/R,x/R) = -1$) as well as the equality $(z(x),Du(x))=|Du(x)|$ as measures, we conclude that $u$ is indeed a solution of \eqref{prob-examples}. Moreover, uniqueness does not hold owing to function
\begin{equation*}
v(x)=\left\{\begin{array}{lcl}
\dfrac{1}{|x|^{q-1}} & \text{ if } & 0<|x|<\rho\,,\\[3mm]
0 & \text{ if } & \rho<|x|<R\,,
\end{array}\right.
\end{equation*}
is also a solution with the same vector field. Indeed, $v\in BV(B_R(0))$, it is straightforward that boundary condition is satisfied and $(z(x),Dv(x))=|Dv(x)|$ holds due to
$$\begin{aligned}
Dv(x)\res_{\{|x|=\rho\}}=(v^+(x)-v^-(x))\nu(x)\mathcal H^{N-1}\res_{\{|x|=\rho\}} &= \left(\frac{1}{\rho^{q-1}}\right)\left(\frac{-x}{\rho}\right) \mathcal H^{N-1}\res_{\{|x|=\rho\}}\\
&=\frac{-x}{\rho^q}\mathcal H^{N-1}\res_{\{|x|=\rho\}}\,,
\end{aligned}$$
and so
\begin{equation*}
(z(x),Dv(x))\res_{\{|x|=\rho\}}=\frac{1}{\rho^{q-1}}\mathcal H^{N-1}\res_{\{|x|=\rho\}} =|Dv(x)|\res_{\{|x|=\rho\}}\,.
\end{equation*}

\end{example}

\section*{Ackonwledgement}
The fourth author is supported by the Spanish Ministerio de Ciencia, Innovaci\'on y Universidades and FEDER, under project PGC2018--094775--B--I00.

\end{document}